\documentclass[a4paper,11pt]{article}
\usepackage{graphicx} 

\usepackage{amsfonts}
\usepackage{amssymb,amsthm,amsmath,graphicx,bm,ebezier,amsthm}
\usepackage{url}
\usepackage[dvipsnames]{xcolor}

\usepackage[ruled,vlined,linesnumbered]{algorithm2e}
\usepackage[noend]{algpseudocode}
\usepackage{enumitem}
\usepackage{hyperref}

\newtheorem{theorem}{Theorem}
\newtheorem{example}[theorem]{Example}
\newtheorem{definition}[theorem]{Definition}
\newtheorem{proposition}[theorem]{Proposition}
\newtheorem{corollary}[theorem]{Corollary}
\newtheorem{lemma}[theorem]{Lemma}

\def\CaH{\mathcal{H}}
\def\CaC{\mathcal{C}}
\def\CaA{\mathcal{A}}

\def\N{\mathbb{N}}
\def\Z{\mathbb{Z}}

\def\k{\mathbb{K}}
\def\msg{\operatorname{msg}}

\def\g{\operatorname{g}}

\def\Fb{\operatorname{Fb}}

\def\Ap{\operatorname{Ap}}
\def\Mult{\operatorname{Mult}}
\def\supp{\operatorname{supp}}

\title{A note on strong affine semigroups}
\date{}
\author{I. García-Marco,
R. Tapia-Ramos,
and A. Vigneron-Tenorio
}

\begin{document}

\maketitle

\abstract{
This work introduces and studies strong affine semigroups, extending the notion of strong numerical semigroups to the higher-dimensional setting.
We show that non-numerical strong affine semigroups present structural differences with respect to strong numerical semigroups.
Special attention is devoted to strong $\CaC$-semigroups. We prove that the family of strong $\CaC$-semigroups with a given set of multiplicities $E$ admits a maximal element and has a tree structure. We characterize when this family is finite and provide an algorithm to compute all such semigroups up to a fixed genus. We also introduce the notion of special strong affine semigroups and obtain refined versions of several previous results.
Finally, we study toric ideals arising from strong affine semigroups, determining their indispensable monomials and Betti elements for several families.

{\small
{\it Key words:} Affine semigroup, Apéry set, $\CaC$-semigroup, Frobenius element, genus, indispensable monomial, toric ideal, semigroup ideal, strong semigroup.

{2020 \it Mathematics Subject Classification:} 20M14, 05E40, 20M05, 13F20.}

\section{Introduction}
Affine semigroups provide a natural higher-dimensional generalization of numerical semigroups and constitute a central object in combinatorial commutative algebra and discrete geometry.
Let $p$ be a positive integer and let $\N$ be the set of non-negative integers. An affine semigroup $S\subseteq \N^p$ is a finitely generated submonoid of $\N^p$, that is, there exists a finite set $L = \{l_1,\ldots,l_s\} \subset \mathbb{N}^p$ such that $S$ is equal to $\langle L \rangle :=\{\sum_{i=1}^s \alpha_i l_i \mid \alpha_1,\ldots , \alpha_s\in \N\}$. It is well-known that every affine semigroup admits a unique minimal generating set (see \cite{libro_rosales2}), denoted by $\msg(S)$.

A notable subset of $\msg(S)$ is the set of multiplicities of $S$, denoted $\Mult (S)$. This set consists of the minimal non-zero elements in $S$ with respect to the partial order induced by its spanning cone. Hence, we can write $\msg(S) = E \sqcup A$ with $E=\Mult (S)$, and $A=\msg(S)\setminus E$. We say that $S=\langle E\sqcup A\rangle\subseteq\N^p$ is strong if for all $x,y\in S\setminus\{0\}$ with $x\neq y$, there exists $e\in E$ such that $x+y-e\in S$. This definition is inspired by \cite{Strong nuevo de JC}, which is formulated for numerical semigroups. Originally, strong numerical semigroups were introduced in \cite[Section 5]{RoblesRosalesModularFrob}.

This paper aims to study strong affine semigroups, which exhibit several fundamental differences with respect to the numerical semigroup case. While strong numerical semigroups are relatively rare, strong affine semigroups are abundant. Indeed, every affine semigroup with $\msg(S) = \Mult(S)$ is strong. Moreover, unlike the numerical case, the intersection of two strong affine semigroups with the same set of multiplicities need not be strong. Hence, in the language of \cite{PseudoFrobeniusVariety}, strong affine semigroups do not form a Frobenius pseudo-variety.

A relevant class of affine semigroups is the family of $\CaC$-semigroups, which are affine semigroups with finite complement in their spanning cone. The cardinality of this complement is called the genus of $S$ and is denoted by $\g(S)$.
Numerical semigroups are precisely one-dimensional $\mathcal{C}$-semigroups.
We study the family $\mathcal{ST}(E)$ of strong $\CaC$-semigroups with set of multiplicities $E$. We observe that this family admits a maximal element $R(E)$. Furthermore, we describe all the elements in $\mathcal{ST}(E)$ as vertex in a tree with root $R(E)$, and where semigroups at distance $d$ from $R(E)$ have genus $\g(R(E)) + d$.
 This construction yields an effective algorithm to compute all elements of $\mathcal{ST}(E)$ up to a fixed genus.
Interestingly, the set $\mathcal{ST}(E)$ may be finite or infinite depending on the choice of $E$. We characterize when either case happens. This contrasts with the numerical semigroup case, where the set of strong numerical semigroups with fixed multiplicity is always infinite.

We also introduce and study a subclass of strong affine semigroups, called special strong semigroups, closely related to affine semigroups of maximal embedding dimension.

Finally, we study the toric ideals associated with strong affine semigroups, with particular emphasis on indispensable monomials and Betti elements. Using combinatorial techniques, we obtain explicit descriptions for several families of strong and special strong semigroups.

The content of this work is organized as follows: in Section \ref{sec:Preli}, we provide the basic background on affine semigroups and fix the notation used throughout the work.
In Section \ref{sec:chra}, we present three different characterizations of strong affine semigroups, all of which extend the numerical case. In particular, we characterize the property of being strong by describing the Apéry set $\Ap(S,E)$, and we provide a criterion to determine when an affine semigroup is strong in terms of its minimal generators.
In Section~\ref{sec:ST}, we analyze the set $\mathcal{ST}(E)$ of strong $\CaC$-semigroups with set of multiplicities $E$, and we provide an algorithm to compute all such semigroups up to a given genus. This construction naturally leads to an associated tree structure. In Section~\ref{sec:strongMultGenus}, we characterize when $\mathcal{ST}(E)$ is finite. In Section~\ref{sec:sss}, we introduce the concept of special strong affine semigroups and specialize some results obtained so far. Finally, in Section~\ref{sec:indisp}, we describe the set of indispensable monomials and Betti elements of strong affine semigroups.

\section{Preliminaries}\label{sec:Preli}

This section introduces the notation and definitions that will be used throughout the paper. Given a positive integer $q\in \N$, we denote by $[q]$ the set $\{1,2, \ldots, q\}$.

For a finite set $L =  \{l_1,\ldots,l_k\} \subseteq \mathbb N^p$, the integer cone spanned by $L$  is
\[ \CaC(L) := \left\{ \sum_{i = 1}^k \alpha_i l_i \ \vert \ \alpha_i \in \mathbb R_{\geq 0} \right\} \cap \N^p.\]
For an affine semigroup $S$, we set $\CaC(S) := \CaC(\msg(S))$. By \cite[Corollary 2.10]{Bruns}, these integer cones are affine semigroups.

Let $S \subseteq \mathbb{N}^p$ be an affine semigroup, the partial order induced by $S$ is defined by  \[ x \le_S y  \ \Longleftrightarrow \ y - x \in S;\] for all $x,y \in \CaC(S)$. With this notion, one has that $\msg(S) = {\rm min}_{\le_S}(S\setminus\{0\})$, i.e., the minimal set of generators of $S$ coincides with the minimal elements in $S \setminus\{0\}$ with respect to $\le_S$.

For a numerical semigroup $S\subseteq\N$, its multiplicity is the minimum element in $S\setminus\{0\}$.
For an affine semigroup $S \subseteq \N^p$, we define its set of multiplicities as $\Mult (S):=\min _{\le_{\CaC(S)}} (S\setminus\{0\})$, i.e., the set of minimal elements with respect to the partial order induced by $\CaC(S)$. Since $S \subseteq \CaC(S)$, then $x \le_{\CaC(S)} y$ whenever  $x \le_S y$ and, hence, $\Mult(S) \subseteq \msg(S)$. So, for any affine semigroup $S$, we write $\msg(S)=E\sqcup A$, where $E=\Mult (S)$, and $A=\msg(S)\setminus E$.
In general, for a finite set $E\subset \N^p\setminus \{0\}$, we say that $E$ is $\CaC(E)$-incomparable, or simply incomparable, if $E=\min _{\le_{\CaC(E)}} (E)$.

The Apéry set of an affine semigroup $S\subseteq \N^p$ with respect to an element $m\in S\setminus\{0\}$ is defined as $\Ap(S,m)=\{s\in S\mid s-m\notin S\}$. If $S$ is a numerical semigroup, then $\Ap(S,m)$ is a finite set with several remarkable properties. However, in general, the Apéry set with respect to an element is not finite when $p>1$. For a finite set $L \subseteq S \setminus \{0\}$, we denote $\Ap(S,L) := \bigcap_{m\in L} \Ap (S,m)$ and we have that this set is finite if and only if $\CaC(L) = \CaC(S)$ (see, e.g., \cite[Theorem 2.6]{GGM}). In particular, we have that $\Ap(S, \Mult(S))$ is always finite.

An affine semigroup $S$ with finite complement in $\CaC(S)$ is called a $\CaC$-semigroup. From now on, we denote by $\CaH(S)=\CaC(S)\setminus S$ the set of gaps of $S$.

Given an finite incomparable set $E \subseteq \N^p$, consider the affine semigroup \[ R(E) := \{0\} \cup (E + \mathcal{C}(E)).\]
Every affine semigroup $S$ with $\Mult(S) = E$ satisfies that $\langle E \rangle \subseteq S  \subseteq R(E).$ Extending the notation given in \cite{Strong nuevo de JC}, we define $\mathcal{ST}(E)$ as the set of all strong $\CaC$-semigroups whose set of multiplicities is $E$. Since $R(E)$ is a strong $\CaC$-semigroup with set of multiplicities $E$, then $R(E)$ is the maximum element of $\mathcal{ST}(E)$ with respect to the inclusion.
Given a monomial order on $\N^p$ (see \cite{CoxLO} for a general discussion on monomial orders) and $S \in \mathcal{ST}(E)$ such that $S \neq R(E)$, we define the Frobenius element of $S$ with respect to $\preceq$ as $\Fb_{E}(S):=\max_{\preceq}(R(E)\setminus S)$. By convention  $\Fb_{E}(R(E))=(-1,\ldots ,-1)$.

\section{Characterizing strong affine semigroups}\label{sec:chra}
Recall that an affine semigroup $S=\langle E\sqcup A\rangle\subseteq\N^p$ is strong if for all $x,y\in S\setminus\{0\}$ with $x\neq y$, there exists $e\in E$ such that $x+y-e\in S$.  Following an argument analogous to the one used in the proof of \cite[Proposition 2]{Strong nuevo de JC}, we present the following characterization.

\begin{lemma}
Let $S=\langle E\sqcup A\rangle\subseteq \N^p$ be an affine semigroup. Then, $S$ is strong if and only if, for all $x,y\in S\setminus\{0\}$ such that $x- y \neq km$ for all $(m,k)\in E\times \N$, there exists $e\in E$ such that $x+y-e\in S$.
\end{lemma}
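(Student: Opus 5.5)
The forward implication is immediate. If $S$ is strong, the defining condition holds for every pair of distinct nonzero elements, hence in particular for those pairs with $x-y\neq km$ for all $(m,k)\in E\times\N$. Note that $x-y\neq 0\cdot m=0$, so such pairs automatically satisfy $x\neq y$. The whole content of the lemma is therefore the converse.

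For the converse, I would assume the restricted condition and take $x,y\in S\setminus\{0\}$ with $x\neq y$. If $x-y\neq km$ and $y-x\neq km$ for all $(m,k)\in E\times\N$, the hypothesis applies directly (using the symmetry of $x+y$ in $x$ and $y$). Otherwise, after swapping $x$ and $y$ if necessary, we have $x=y+km$ for some $m\in E$ and some $k\in\N$. Here $k\geq 1$, because $x\neq y$. We now take $e=m\in E$ and compute
\[
x+y-e=y+km+y-m=2y+(k-1)m .
\]
This element lies in $S$, since $y\in S$, $m\in S$, $k-1\in\N$, and $S$ is closed under addition. So the required $e$ exists in this case as well, and $S$ is strong.

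The only point needing care is the symmetry step: the lemma's condition is phrased with $x-y$, while the excluded pairs can arise in either order. I would handle this by noting that the conclusion $x+y-e\in S$ is symmetric in $x$ and $y$, so it suffices to check both orientations $x-y$ and $y-x$. Beyond that, the argument is a direct verification mirroring \cite[Proposition 2]{Strong nuevo de JC}, and I expect no real obstacle.
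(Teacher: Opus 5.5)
Your argument is correct and is essentially the one the paper intends (it only points to the analogous numerical-semigroup argument): a pair with $x=y+km$, $k\geq 1$, is settled by $e=m$ because $x+y-m=2y+(k-1)m\in S$, and every other pair of distinct non-zero elements is covered by the hypothesis. The symmetry step is harmless but not needed: if $x-y\neq km$ for all $(m,k)\in E\times\N$ the hypothesis already applies to the ordered pair $(x,y)$, and otherwise $x-y=km$ with $k\geq 1$.
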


Given $S$ an affine semigroup, then $S\cap \tau$ is isomorphic to a numerical semigroup for any ray $\tau$ such that $\{0\} \subsetneq \tau \cap \CaC(S)$ (see  \cite[Lemma 2]{check_C_semigr}). Indeed, since $\tau\cap\N^p=\langle n \rangle$ for some $ n\in\N^p$, then $S\cap \tau$ is isomorphic to $\{k\in\N\mid kn\in S\}$, which is a submonoid of $\N$.

Let us prove that the property of being strong is inherited by the affine semigroups associated to the faces and, in particular, by the numerical semigroups associated to the extremal rays. Note that any integer cone $\CaC$ is an integer polyhedron determined by a set of supporting hyperplanes. We assume that the cone corresponds to the positive space delimited by these hyperplanes, specifically
\begin{equation}\label{coneHyperplanes}
\CaC=\{x\in \N^p\mid h_1(x)\ge 0,\ldots ,h_q(x)\ge 0\},
\end{equation}
where the coefficients of each supporting hyperplane are integers relatively prime. Each $d$-dimensional face $\sigma$ of $\CaC$ is determined by the intersection of $p-d$ supporting hyperplanes of $\CaC$ (\cite{Bruns}). An extremal ray $\tau$ of $\CaC$ is a $1$-dimensional face.

\begin{lemma}
Let $S=\langle E\sqcup A\rangle\subseteq \N^p$ be a strong affine semigroup. Then, $S \cap \sigma$ is strong for every face $\sigma$ of $\CaC$.
\end{lemma}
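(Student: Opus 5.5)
The plan is to use the standard ``extremality'' of faces together with the description of $\sigma$ by supporting hyperplanes in \eqref{coneHyperplanes}. Write $\CaC=\CaC(S)$ and $\sigma=\{x\in\CaC\mid h_i(x)=0 \text{ for all } i\in I\}$ for some $I\subseteq[q]$.

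The basic observation I will use repeatedly is the following. If $a,b\in\CaC$ and $a+b\in\sigma$, then $a,b\in\sigma$. Indeed, for $i\in I$ we have $h_i(a)+h_i(b)=0$ with both terms non-negative, so both vanish. I may assume $\sigma\neq\{0\}$, since otherwise $S\cap\sigma=\{0\}$ and there is nothing to prove.

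First, I will show that $S\cap\sigma$ is an affine semigroup generated by $\msg(S)\cap\sigma$. Any $s\in S\cap\sigma$ is a sum of minimal generators of $S$. By the observation applied inductively, every summand lies in $\sigma$. Consequently $\CaC(S\cap\sigma)=\CaC(\msg(S)\cap\sigma)$, and this cone is contained in $\sigma$.

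Second, and this is the step I expect to need the most care, I will identify the set of multiplicities: $\Mult(S\cap\sigma)=E\cap\sigma$. The point is that the two orders agree on $\sigma$. For $x,y\in\sigma$, if $y-x\in\CaC$ then $h_i(y-x)=0$ for $i\in I$, so $y-x\in\sigma$. I also need $y-x$ to lie in the smaller cone $\CaC(S\cap\sigma)$. For this I will use that every element of $\sigma$ is a non-negative real combination of generators of $S$, and, by the same vanishing argument applied to real combinations, only generators lying in $\sigma$ appear with positive coefficient. Hence $\sigma=\CaC(\msg(S)\cap\sigma)=\CaC(S\cap\sigma)$. Therefore $\le_{\CaC(S\cap\sigma)}$ and $\le_{\CaC(S)}$ coincide on $\sigma$. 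Taking minimal nonzero elements of $S\cap\sigma$ then gives exactly $E\cap\sigma$; here the ``$\supseteq$'' inclusion is clear, and for ``$\subseteq$'', an element of $S\cap\sigma$ that is minimal in the restricted order is minimal in $S$, because anything below it in $\CaC$ lies in $\sigma$ by the observation.

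Finally, the strong property transfers directly. Let $x,y\in(S\cap\sigma)\setminus\{0\}$ with $x\neq y$. Since $S$ is strong, there is $e\in E$ with $z:=x+y-e\in S$. Then $e+z=x+y\in\sigma$ with $e,z\in\CaC$, so the observation gives $e\in E\cap\sigma=\Mult(S\cap\sigma)$ and $z\in S\cap\sigma$. This is exactly the strong condition for $S\cap\sigma$. In particular, the argument covers the numerical semigroups attached to the extremal rays.
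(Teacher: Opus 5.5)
Your proof is correct and follows essentially the same route as the paper: nonnegativity of the defining linear forms on $\CaC$ forces both the element $e\in E$ given by the strong property of $S$ and the element $x+y-e$ to lie in $\sigma$. The differences are minor: you use the family $\{h_i\}_{i\in I}$ instead of a single supporting hyperplane, and you actually verify $\Mult(S\cap\sigma)=E\cap\sigma$, which the paper only asserts.
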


\begin{proof} Let $\sigma$ be a face of $\CaC$. Observe that $\Mult(S \cap \sigma) = E \cap \sigma$.
Consider a supporting hyperplane $h$ of $\CaC(S)$ defining $\sigma$, i.e.,
\begin{itemize} \item $h(z) \geq 0$ for all $z \in \CaC(S)$, and
\item  $h(z) = 0$ if and only if $z \in \sigma$, for all $z \in \CaC(S)$.
\end{itemize}
Consider two distinct non-zero elements $x,y\in S \cap \sigma$. Since $S$ is strong, there exists an $e \in E$ such that $x+y-e \in S$. Then, we have that $0 \leq h(x+y-e) = h(x) + h(y) - h(e) = -h(e)$ (since $x+y-e \in S$) and $h(e) \geq 0$ (since $e \in S$). Thus, $h(e) = 0$ and we conclude that $e \in \sigma$. As a consequence, $e \in E \cap \sigma$ and $x+y-e \in S \cap \sigma$.
\end{proof}

The next result generalizes \cite[Proposition 5.7]{RoblesRosalesModularFrob}, originally stated for numerical semigroups. In addition, while in the numerical case only one implication is proved, the next result is an equivalence, which in particular recovers the numerical setting.

\begin{lemma}\label{lemaAperys}
Let $S=\langle E\sqcup A\rangle\subseteq \N^p$ be an affine semigroup. Then, $S$ is a strong affine semigroup if and only if $\Ap (S,E)= A \sqcup B\sqcup \{0\}$, for some $B\subseteq 2A=\{2a\mid a\in A \}$.
\end{lemma}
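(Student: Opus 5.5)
We prove both implications directly from the definitions, using that every element of $S$ is a sum of minimal generators. We first note that $A\subseteq\Ap(S,E)$ and $0\in\Ap(S,E)$ always hold. Indeed, if $a\in A$ and $a-e\in S$ for some $e\in E$, then $a=e+(a-e)$ with $a-e\in S$. Minimality of $a$ with respect to $\le_S$ forces $a-e=0$, so $a=e\in E$, a contradiction.

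($\Rightarrow$) Assume $S$ is strong and let $s\in\Ap(S,E)$ be non-zero. Write $s$ as a sum of minimal generators. No element of $E$ can appear, since otherwise $s-e\in S$. So $s=a_1+\dots+a_k$ with $a_i\in A$. If $k=1$, then $s\in A$.

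Suppose $k\ge 2$ and the summands are not all equal, say $a_1\neq a_2$. Strongness applied to $x=a_1$, $y=a_2$ gives $e\in E$ with $a_1+a_2-e\in S$. Then $s-e=(a_1+a_2-e)+a_3+\dots+a_k\in S$, a contradiction. So all summands equal a single $a$, and $s=ka$. If $k\ge 3$, apply strongness to $x=a$, $y=2a$; these are distinct and non-zero since $a\neq0$. This gives $e\in E$ with $3a-e\in S$, hence $s-e=(3a-e)+(k-3)a\in S$, a contradiction. Therefore $k=2$ and $s\in 2A$. Taking $B=\Ap(S,E)\setminus(A\cup\{0\})$ yields the decomposition, with disjointness clear because $2a\notin A$ by minimality.

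($\Leftarrow$) Assume $\Ap(S,E)=A\sqcup B\sqcup\{0\}$ with $B\subseteq 2A$. Let $x\neq y$ be non-zero elements of $S$; we must find $e\in E$ with $x+y-e\in S$.

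First, if $x+y\notin\Ap(S,E)$, such an $e$ exists by the definition of the Apéry set.

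Otherwise $x+y\in A\sqcup B$. Since $x,y$ are non-zero, $x+y$ is not a minimal generator, so $x+y\notin A$. Hence $x+y=2a$ for some $a\in A$. Any summand of $x+y$ lies in the Apéry set, because if $x-e\in S$ then $x+y-e\in S$. So $x,y\in\Ap(S,E)\setminus\{0\}=A\sqcup B$.

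If either $x$ or $y$ lies in $B$, say $x=2b$ with $b\in A$, then $2a=2b+y$ gives $2a>_S 2b$. This is the delicate point. I would handle it by writing $x+y$ as a sum of at least three elements of $A$, namely $b+b+y'$ where $y'$ is a generator summand of $y$. That sum is not of the form $2a'$ with only two generators, but in a non-cancellative-factorization setting this alone is not a contradiction.

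The cleaner route is to observe that $x+y=2a$ with $x,y\in A$ is the main case to kill. If $x,y\in A$, $x\neq y$, and $x+y=2a$, I need an $e$ from another source. Here I expect the main obstacle: the hypothesis alone does not obviously exclude $a_1+a_2=2a$ with $a_1\neq a_2$ all in $A$ and $2a\in B$. The first thing I would try is to show that such a relation forces $2a\notin\Ap(S,E)$ or is otherwise incompatible with $B\subseteq 2A$. Failing that, I would check whether the intended reading of the lemma requires elements of $B$ to have a unique expression as a sum of elements of $A$, and supply that condition or a counterexample accordingly.

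The cases $x\in B$ and $y\in B$ reduce to this one by the length argument above: both $b+b+y$ and $2a$ would then be expressions of the same Apéry element.
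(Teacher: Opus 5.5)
Your proof of ($\Rightarrow$) is correct and is essentially the paper's argument. The paper splits an Apéry element as $s+s'$ instead of fixing one factorization, but it uses strongness on the same two configurations, $a_1\neq a_2$ and $a$ versus $2a$.

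For ($\Leftarrow$), your proposal stops without a proof. The obstruction you isolated cannot be removed, because the implication is false as stated, already for numerical semigroups. Take $S=\langle 5,6,7,8\rangle$, so $E=\{5\}$ and $A=\{6,7,8\}$. Its gaps are $1,2,3,4,9$, so $\Ap(S,E)=\{0,6,7,8,14\}$ with $14=2\cdot 7\in 2A$. The hypothesis therefore holds with $B=\{14\}$, yet $6+8-5=9\notin S$, so $S$ is not strong. This is exactly your configuration $a_1+a_2=2a\in B$.

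The companion configuration $3a=2a'$ also occurs. For $S=\langle 5,6,9\rangle$ one has $\Ap(S,5)=\{0,6,9,12,18\}=\{0\}\sqcup A\sqcup 2A$. However $6+12-5=13\notin S$, because $18=3\cdot 6=2\cdot 9$.

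The paper's proof of ($\Leftarrow$) passes over precisely these two points. It asserts ``$a_1+a_2\notin\Ap(S,E)$'' for distinct $a_1,a_2\in A$ and ``$3a\notin\Ap(S,E)$'' with no justification. Both assertions fail when $a_1+a_2$ or $3a$ equals some $2a'\in B$.

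The repair you suggested is the right one. Add the requirement that each $2a\in B$ has $a+a$ as its only expression in terms of $\msg(S)$. Strong semigroups satisfy this by Corollary~\ref{escritura_unica}, so the corrected statement is still an equivalence. With that hypothesis your argument closes at once:
\begin{itemize}
\item Let $x\neq y$ be non-zero with $x+y\in\Ap(S,E)$. Then $x+y\notin A$, since it is a sum of two non-zero elements, so $x+y=2a\in B$.
\item Concatenating factorizations of $x$ and $y$ gives an expression of $2a$, which must be $a+a$.
\item This forces $x=y=a$, a contradiction.
\end{itemize}
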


\begin{proof}
Assume that $S$ is a strong affine semigroup. Trivially, $A\sqcup \{0\}\subseteq \Ap (S,E)$. Let $x\in \Ap (S,E)\setminus A$, and suppose $x=s+s'$ for some $s,s'\in S\setminus \{0\}$. If $s\neq s'$, then by hypothesis there exists $e\in E$ such that $x-e=s+s'-e\in S$, and thus $x\notin \Ap (S,E)$. So, $s=s'$ and $x=2s$ with $s\in \msg (S)$. Since $x\in \Ap (S,E)$, then $s\in A$, which completes the proof.

Conversely, assume that $\Ap(S,E)= A \sqcup B \sqcup \{0\}$ with $B\subseteq 2A$, and let $x,y\in S\setminus\{0\}$ such that $x\ne y$. So, $x=\sum_{e\in E}\alpha_e e + \sum _{a\in A} \beta_a a$, and $y=\sum_{e\in E}\alpha'_e e + \sum _{a\in A} \beta'_a a$ where $\alpha_e,\alpha'_e,\beta_a,\beta'_a\in \N$. If $\sum_{e\in E} (\alpha_e+\alpha'_e)\neq 0$, then $x+y-e\in S$ for some $e\in E$. Now, assume $\sum_{e\in E} (\alpha_e+\alpha'_e)= 0$. Since $x\ne y$, we distinguish two different cases:
\begin{itemize}
    \item The sum $x+y$ equals $(\beta_a+\beta'_a)a$ with $\beta_a+\beta'_a\geq3$ for some $a\in A$. Hence, $x+y=3a+(\beta_a+\beta'_a-3)a$. Observe that $3a\notin \Ap (S,E)$ and thus $3a-e\in S$ for some $e\in E$, which implies that $x+y-e\in S$.
    \item There exist distinct $a_1,a_2\in A$ such that $\beta_{a_1}, \beta'_{a_2}\geq 1$. Since $a_1+a_2\notin \Ap(S,E)$, there exists $e\in E$ such that $a_1+a_2-e\in S$, and it follows that $x+y-e\in S$.
\end{itemize}
We conclude that $S$ is strong.
\end{proof}

\begin{corollary}\label{escritura_unica}
Let $S$ be a strong affine semigroup. Then, any element in $\Ap (S,E)$ has a unique expression in terms of the minimal generators of $S$.
\end{corollary}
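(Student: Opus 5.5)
By Lemma~\ref{lemaAperys}, every element of $\Ap(S,E)$ is $0$, an element $a\in A$, or an element $2a$ with $a\in A$. The plan is to check each of these three kinds separately and show that its factorization in terms of $\msg(S)=E\sqcup A$ is unique.

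The key observation is that a factorization of an element $x\in\Ap(S,E)$ cannot involve any $e\in E$. If $x=\sum_{e}\alpha_e e+\sum_a\beta_a a$ with some $\alpha_e>0$, then $x-e\in S$, which contradicts $x\in\Ap(S,E)$. So every factorization of $x$ uses only elements of $A$.

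Now treat the three cases.
\begin{itemize}
\item For $x=0$, the only expression is the trivial one. Indeed, $S\subseteq\N^p$ is pointed, so a nonnegative combination of nonzero vectors of $\N^p$ equal to $0$ must have all coefficients zero.
\item For $x=a\in A$, suppose $a=\sum_{a'}\beta_{a'}a'$ with $\sum_{a'}\beta_{a'}\ge 2$. Then $a$ is a sum of two nonzero elements of $S$, contradicting $a\in\msg(S)$, since minimal generators are the $\le_S$-minimal elements of $S\setminus\{0\}$. Hence $\sum_{a'}\beta_{a'}=1$, and the single summand must be $a$ itself, because distinct minimal generators are distinct vectors.
\item For $x=2a$, take any factorization $2a=\sum_{a'}\beta_{a'}a'$.
\end{itemize}

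The third case is the main obstacle. Here I would reuse the argument from the first part of the proof of Lemma~\ref{lemaAperys}. If the factorization contains two distinct generators $a_1\neq a_2$, write $2a=a_1+(2a-a_1)$ with $2a-a_1\in S\setminus\{0\}$ and $2a-a_1\neq a_1$. Strongness then gives some $e\in E$ with $2a-e\in S$, a contradiction. So the factorization is $2a=k a'$ for a single $a'\in A$.
\begin{itemize}
\item If $k\ge 3$, write $2a=a'+(k-1)a'$; the two summands are distinct nonzero elements, and strongness again yields a contradiction.
\item $k=1$ is impossible, because then $a'=2a=a+a$ would not be a minimal generator.
\item Hence $k=2$ and $2a'=2a$. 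Since we are in $\N^p$, this forces $a'=a$.
\end{itemize}
This shows the factorization of $2a$ is unique, which completes the proof.
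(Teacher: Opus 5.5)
Your proof is correct and follows the same route as the paper: reduce via Lemma~\ref{lemaAperys} to the elements $0$, $a\in A$ and $2a$, and use strongness to rule out any splitting of $2a$ into two distinct nonzero summands. You spell out the case analysis (no $E$-terms, $k\ge 3$, $k=1$, $k=2$) that the paper compresses into one line; the only step you leave unstated, $2a-a_1\neq a_1$, holds because otherwise $a_1-a_2\in S$, which contradicts either $a_1\neq a_2$ or the minimality of $a_1$.
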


\begin{proof}
By Lemma \ref{lemaAperys}, it is sufficient to prove the statement for the elements in $A \sqcup B\sqcup \{0\}$, for some $B\subseteq 2A$. Note that it is true for every $a\in A \sqcup \{0\}$. Consider $2a \in B$, and assume that $2a=x+y$ with $x,y\in S$. Since $S$ is strong, if $x\neq y$, then $2a\notin \Ap(S,E)$. Hence, $x=y$, and then $a=x$. Thus, $2a$ has a unique expression.
\end{proof}

An analogue of \cite[Proposition 5.1]{RoblesRosalesModularFrob} holds in the context of affine semigroups, giving us a method to check whether a given affine semigroup is strong.

\begin{proposition}\label{CheckStrongByGenerators}
Let $S=\langle E\sqcup A\rangle\subseteq \N^p$ be an affine semigroup. Then, $S$ is a strong semigroup if and only if for all $a_1,a_2\in A$ with $a_1\neq a_2$, $a_1+a_2-e_1,3a_1-e_2\in S$ for some $e_1,e_2\in E$.
\end{proposition}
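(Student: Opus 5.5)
The plan is to reduce both directions to Lemma~\ref{lemaAperys}, which characterizes strong semigroups by the shape of $\Ap(S,E)$. Each direction is a short translation between ``$z-e\in S$ for some $e\in E$'' and ``$z\notin\Ap(S,E)$''.

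For the forward implication, assume $S$ is strong and take distinct $a_1,a_2\in A$. The definition applies directly to the pair $x=a_1$, $y=a_2$, since they are nonzero and distinct, and yields $e_1\in E$ with $a_1+a_2-e_1\in S$. For $3a_1$, apply the definition to $x=a_1$ and $y=2a_1$. These are distinct because $a_1\neq 0$ in $\N^p$, so we get $e_2\in E$ with $3a_1-e_2\in S$. This direction needs nothing beyond the definition.

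For the converse, assume the condition on generators and aim to show $\Ap(S,E)=A\sqcup B\sqcup\{0\}$ with $B\subseteq 2A$; Lemma~\ref{lemaAperys} then finishes. First, $A\sqcup\{0\}\subseteq\Ap(S,E)$. Indeed, if $a-e\in S$ for some $a\in A$, then $a=e+(a-e)$ would contradict minimality of $a$, since $a\ne e$ because $E\cap A=\emptyset$. Next take $x\in\Ap(S,E)$ with $x\notin A\cup\{0\}$ and write $x$ as a sum of minimal generators with at least two summands. No summand lies in $E$, for otherwise $x-e\in S$. So $x$ is a sum of at least two elements of $A$. Two cases can occur. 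If two distinct generators $a_1\neq a_2$ appear, then $x-e_1=(a_1+a_2-e_1)+(\text{rest})\in S$, a contradiction. Otherwise $x=ka$ with $k\ge 2$. If $k\ge 3$, then $x-e_2=(3a-e_2)+(k-3)a\in S$, again a contradiction. Hence $x=2a$ with $a\in A$, which gives $B\subseteq 2A$.

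The main subtle point is making sure the decomposition $\Ap(S,E)=A\sqcup B\sqcup\{0\}$ is genuinely disjoint and $A$ is fully contained, which the minimality argument above handles. Alternatively, one could mimic the converse proof of Lemma~\ref{lemaAperys} directly on arbitrary $x,y$, but routing through the Apéry set is cleaner.
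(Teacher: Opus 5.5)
Your forward direction and your computation of $\Ap(S,E)$ under the generator hypothesis are both correct. The gap is the final step, ``Lemma~\ref{lemaAperys} then finishes''. It uses the ``if'' direction of that lemma, and that direction is false as stated. Take the numerical semigroup $S=\langle 5,9,11,13\rangle\subseteq\N$, so $E=\{5\}$ and $A=\{9,11,13\}$. The least elements of $S$ in the residue classes $0,1,2,3,4$ modulo $5$ are $0,11,22,13,9$. Hence $\Ap(S,E)=\{0,9,11,13,22\}=A\sqcup\{22\}\sqcup\{0\}$ with $22=2\cdot 11\in 2A$. Yet $9+13-5=17\notin S$, so $S$ is not strong. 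The converse argument in the proof of Lemma~\ref{lemaAperys} breaks where it claims $a_1+a_2\notin\Ap(S,E)$ and $3a\notin\Ap(S,E)$. The shape of the Apéry set does not rule out coincidences $a_1+a_2=2a'$ or $3a=2a'$ with $2a'\in B$; here $9+13=2\cdot 11$. So the real subtle point is not disjointness of the decomposition. The proposition itself survives, since this $S$ also violates your generator hypothesis; only your route fails.

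The repair is the alternative you mention at the end, which is exactly what the paper does. Your hypothesis supplies directly the two facts that the Apéry shape cannot. Take distinct nonzero $x,y\in S$.
\begin{itemize}
\item If $x-e\in S$ or $y-e\in S$ for some $e\in E$, then $x+y-e\in S$.
\item Otherwise, suppose there are distinct $a_1,a_2\in A$ with $x-a_1,\,y-a_2\in S$. Then $x+y-e_1=(x-a_1)+(y-a_2)+(a_1+a_2-e_1)\in S$.
\item If neither case applies, the sets $\{a\in A\mid x-a\in S\}$ and $\{a\in A\mid y-a\in S\}$ are nonempty and admit no pair of distinct elements. So both equal the same singleton $\{a\}$, giving $x=ka$ and $y=la$ with $k+l\ge 3$ because $x\neq y$. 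Then $x+y-e_2=(3a-e_2)+(k+l-3)a\in S$.
\end{itemize}
The Apéry-set detour should therefore be dropped rather than kept as the main argument.
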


\begin{proof}
The direct implication is immediate. So, assume that for all $a_1,a_2\in A$ with $a_1\neq a_2$, $a_1+a_2-e_1,3a_1-e_2\in S$ for some $e_1,e_2\in E$. Consider $x,y\in S\setminus \{0\}$ such that $x\neq y$. Therefore, $x=\sum_{e\in E}\alpha_e e + \sum _{a\in A} \beta_a a$, and $y=\sum_{e\in E}\alpha'_e e + \sum _{a\in A} \beta'_a a$ with $\alpha_e,\alpha'_e,\beta_a,\beta'_a\in \N$. If $\sum_{e\in E} (\alpha_e+\alpha'_e)\neq 0$, then $x+y-e\in S$ for some $e\in E$. Otherwise, if $\sum_{e\in E} (\alpha_e+\alpha'_e)=0$, consider $X=\{a\in A\mid x-a\in S\}$ and $Y=\{a\in A\mid y-a\in S\}$. Since both $x$ and $y$ are non-zero, $X$ and $Y$ are not empty. Now, we consider two cases:
\begin{itemize}
\item If $X=Y=\{a\}$, then $x+y=(\beta_a+\beta'_a)a$. Since $x\neq y$, it follows that $\beta_a+\beta'_a\ge 3$, and thus $x+y-e\in S$ for some $e\in E$.
\item If $X\neq Y$. Let $a_1\in X$ and $a_2\in Y$ be distinct elements. By hypothesis, $a_1+a_2- e\in S$ for some $e\in E$. Then,
\begin{multline*}
x+y-e= \sum _{a\in A\setminus\{a_1\}} \beta_a a + \sum _{a\in A\setminus\{a_2\}} \beta'_a a +\\
(\beta_{a_1}-1)a_1+(\beta'_{a_2}-1)a_2+a_1+a_2-e\in S.
\end{multline*}
\end{itemize}
In any case, $x+y-e\in S$, for some $e\in E$. Therefore, $S$ is strong.
\end{proof}

\section{Strong $\CaC$-semigroups with a given set of multiplicities}\label{sec:ST}
Fix an incomparable set $E\subset \N^p\setminus\{0\}$ and let $\CaC=\CaC(E)$.
The objective of this section is to compute the set $\mathcal{ST}(E)$.
The strategy is as follows. Starting from the maximal element of $\mathcal{ST}(E)$ with respect to inclusion, all strong $\CaC$-semigroups with set of multiplicities $E$ are obtained by iteratively removing suitable elements. A key point is to characterize when this method,  which increases the genus by one, preserves the property of being strong.

Recall that $R(E)=\{0\}\cup (E + \CaC(E))$ is the maximal element in $\mathcal{ST}(E)$ with respect to the inclusion. To simplify, we write $\CaC=\CaC(E)$, and we assume that $\CaC$ has, at least, $p$ extremal rays. If $\CaC$ has only $p' < p $ extremal rays, then it is contained in a subspace of dimension $p'$, and $R(E)$ can be uniquely identified with a $\CaC'$-semigroup with $p'$ extremal rays in
$\mathbb{N}^{p'}$. Thus, this situation is already covered by taking $p = p'$.

To describe the elements of $\mathcal{ST}(E)$, in analogy with properties of strong numerical semigroups established in \cite{RoblesRosalesModularFrob}, we present the following result. Recall that $\Fb_E(S)=\max_\preceq(R(E)\setminus S)$.

\begin{lemma}\label{strongUnionFrob}
Let $\preceq$ be a monomial order on $\N^p$, and let $S\neq R(E)$ be a strong $\CaC$-semigroup, then $S \cup \{\Fb_E(S)\}$ is a strong $\CaC$-semigroup.
\end{lemma}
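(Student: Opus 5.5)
Write $f=\Fb_E(S)$ and $T=S\cup\{f\}$. The plan is to check, in order, that $T$ is a monoid, that $\Mult(T)=E$ and $T$ is a $\CaC$-semigroup, and finally that $T$ is strong. I expect the last step to be the main obstacle.

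\emph{Closure of $T$.} Take $s\in S\setminus\{0\}$. Then $f+s\in R(E)$, and since $\preceq$ is a monomial order, $f\prec f+s$. By maximality of $f$ in $R(E)\setminus S$, this forces $f+s\in S$. Similarly $2f\in R(E)$ and $2f\succ f$, so $2f\in S$. Hence $T$ is closed under addition. Because $S\subseteq T\subseteq R(E)$, the cone of $T$ is still $\CaC$ and its complement in $\CaC$ is still finite, so $T$ is a $\CaC$-semigroup.

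\emph{Multiplicities.} We have $E\subseteq S\subseteq T$, and every nonzero element of $T\subseteq R(E)$ lies in $E+\CaC$. Hence the $\le_\CaC$-minimal nonzero elements of $T$ are exactly $E$, using that $E$ is incomparable. Also $f\notin E$, since $E\subseteq S$. So $f$ becomes a new element of $A_T=\msg(T)\setminus E$; it is minimal because it cannot be written as a sum of nonzero elements of $S$, which lie in $S$.

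\emph{Strong property.} I would apply the strong definition directly to $T$. Take distinct $x,y\in T\setminus\{0\}$.
\begin{itemize}
\item If both lie in $S$, the strong property of $S$ gives $e\in E$ with $x+y-e\in S\subseteq T$.
\item Otherwise we may take $x=f$ and $y=s\in S\setminus\{0\}$ with $s\neq f$. Here I need some $e\in E$ with $f+s-e\in T$.
\end{itemize}

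\emph{Main obstacle: the case $x=f$.} Since $s\in R(E)\setminus\{0\}$, write $s=e+c$ with $e\in E$ and $c\in\CaC$. Then $f+s-e=f+c$.
\begin{itemize}
\item If $c=0$, this element is $f\in T$.
\item If $c\neq 0$, then $f+c\in R(E)$, because $f\in E+\CaC$ gives $f+c\in E+\CaC$. Moreover $f+c\succ f$, so maximality gives $f+c\in S$.
\end{itemize}
In both cases $f+s-e\in T$. This argument never uses the hypothesis that $S$ is strong except for pairs inside $S$, and the maximality of the Frobenius element with respect to the monomial order handles everything else. So the key fact I would verify carefully is that any element of $R(E)$ that is $\succ f$ lies in $S$, together with $f+\CaC\subseteq R(E)$.

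Alternatively, one could verify the criterion of Proposition~\ref{CheckStrongByGenerators}, namely $f+a-e_1\in T$ and $3f-e_2\in T$. This reduces to the same computation: $3f-e_2=f+(2f-e_2)$ with $2f-e_2\in\CaC$, using $f\in E+\CaC$.
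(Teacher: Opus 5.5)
Your proof is correct and follows essentially the same route as the paper. Pairs inside $S$ are handled by the strong property of $S$. For the pair $(\Fb_E(S),y)$ you write $y=e+c$ with $c\in\CaC$ and use the maximality of $\Fb_E(S)$ in $R(E)\setminus S$ to get $\Fb_E(S)+c\in S\cup\{\Fb_E(S)\}$.

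Your separate treatment of $c=0$ is slightly more careful than the paper, which asserts the strict inequality $\Fb_E(S)\prec \Fb_E(S)+y-e$ even when $y\in E$. Your explicit checks of closure under addition and of $\Mult(S\cup\{\Fb_E(S)\})=E$ fill in what the paper calls trivial.
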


\begin{proof}
Trivially, $S \cup \{\Fb_E(S)\}$ is a $\CaC$-semigroup. Besides, if $S \cup \{\Fb_E(S)\}=R(E)$, then it is also strong. So, assume $S \cup \{\Fb_E(S)\}\neq R(E)$. Consider $S$ is minimally generated by the disjoint union $E\sqcup A$, and let $x,y\in S \cup \{\Fb_E(S)\}$ with $x\neq y$.
If $x,y\notin\{\Fb_E(S)\}$, since $S$ is strong, there exists  $e\in E$ such that $x+y-e\in S\subset S\cup \{\Fb_E(S)\}$.
Otherwise, without loss of generality, assume that $x=\Fb_E(S)$. By definition of $E$, there exists $e\in E$ such that $y-e\in \CaC$.
Moreover, $\Fb_E(S)\prec \Fb_E(S) +y -e\in R(E)$, which implies that $\Fb_E(S) +y -e\in S\cup \{\Fb_E(S)\}$.
\end{proof}

Note that, given $S \in \mathcal{ST}(E)$ minimally generated by $\langle E \sqcup A \rangle$ and $x \in S \setminus \{0\}$ then $S \setminus \{x\}$ is a $\mathcal{C}$-semigroup if and only if $x \in \msg(S)$. Moreover, $\Mult(S \setminus \{x\} )=E$ if and only if $x \in A$. The following result shows a containment relation for $\msg(S \setminus \{x\}) \setminus E$, for $x \in A$.

\begin{lemma}\label{SistMin_S_menos_a}
Let $S=\langle E\sqcup A\rangle\subseteq \N^p$ be an affine strong semigroup, and $a\in A$. Then, the set $\msg (S\setminus\{a\})\setminus E$ is a subset of $(A\setminus\{a\})\cup \{2a\} \cup (\{a\}+E)$.
\end{lemma}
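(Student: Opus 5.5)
Let $T=S\setminus\{a\}$. First, $T$ is a semigroup: $a\in\msg(S)$ cannot be written as a sum of two non-zero elements of $S$, so $T$ is closed under addition. The plan is to take an arbitrary $x\in\msg(T)\setminus E$ and show that it lies in $(A\setminus\{a\})\cup\{2a\}\cup(\{a\}+E)$. We do this by comparing a factorization of $x$ in $S$ with the fact that $x$ is minimal in $T$. The strong property enters through Lemma~\ref{lemaAperys}, which describes $\Ap(S,E)$.

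We split on whether $x$ is a minimal generator of $S$. If $x\in\msg(S)=E\sqcup A$, then $x\notin E$ and $x\neq a$, so $x\in A\setminus\{a\}$. Otherwise $x=s+s'$ with $s,s'\in S\setminus\{0\}$. Since $x$ is minimal in $T$, at least one summand of \emph{every} such decomposition must equal $a$. In particular, $x-a\in S\setminus\{0\}$.

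\begin{itemize}
\item If $x\notin\Ap(S,E)$, there is $e\in E$ with $x-e\in S$, and $x-e\ne 0$ because $x\notin E$. The decomposition $x=e+(x-e)$ has a summand equal to $a$, and $e\neq a$ since $E\cap A=\emptyset$. Hence $x-e=a$, and $x\in\{a\}+E$.
\item If $x\in\Ap(S,E)$, then $x\notin A\sqcup\{0\}$ because $x$ is not a minimal generator of $S$. Lemma~\ref{lemaAperys} then gives $x=2b$ with $b\in A$, and by Corollary~\ref{escritura_unica} the only factorization of $x$ is $b+b$. Since $a$ must appear in it, $b=a$ and $x=2a$.
\end{itemize}

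These three outcomes exhaust the possibilities, which proves the containment. The only delicate step is the last case. There one needs that $2b$ admits no decomposition other than $b+b$; otherwise the minimality argument would not force $b=a$. This is exactly what Corollary~\ref{escritura_unica}, and hence strongness, provides. Everything else uses only the minimality of $x$ in $T$.
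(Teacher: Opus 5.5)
Your proof is correct, but it takes a different route from the paper's.

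\textbf{The paper's route.} It starts from the general containment
$\msg(S\setminus\{a\})\subseteq E\cup(A\setminus\{a\})\cup\{2a,3a\}\cup(\{a\}+E)\cup(\{a\}+(A\setminus\{a\}))$,
which holds for any affine semigroup and is quoted from \cite[Lemma 3]{Csemigroup}. It then applies the definition of strong directly to the two unwanted candidates. For $\tilde a\in A\setminus\{a\}$ and a suitable $e\in E$, both $a+\tilde a=e+(a+\tilde a-e)$ and $3a=e+(3a-e)$ are decompositions into non-zero elements of $S\setminus\{a\}$. So neither $a+\tilde a$ nor $3a$ remains a minimal generator.

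\textbf{Your route.} You avoid the external lemma and split on membership in $\Ap(S,E)$. Non-atoms of $S$ outside the Ap\'ery set are forced into $\{a\}+E$ by minimality in $T$ alone, with no use of strongness. Non-atoms inside the Ap\'ery set are pinned down to $2a$ via Lemma~\ref{lemaAperys} and Corollary~\ref{escritura_unica}.

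\textbf{Comparison.} The paper's argument is shorter and uses strongness only in its raw form, on two explicit sums. It also shows the general shape of $\msg(S\setminus\{a\})$ for arbitrary $S$. Yours is self-contained within the paper and makes clear why exactly the three pieces appear. It also shows that strongness is needed only to control the Ap\'ery elements. The cost is relying on the heavier structural description of $\Ap(S,E)$.

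A small wording point: $x\neq 0$ holds because $x\in\msg(T)$, not because $x\notin\msg(S)$.
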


\begin{proof}
Consider $\tilde{A}=A\setminus\{a\}$. By \cite[Lemma 3]{Csemigroup}, the minimal system of generators of $S\setminus\{a\}$ is a subset of $E\cup \tilde{A}\cup \{2a,3a\} \cup \left(\{a\}+E\right)\cup (\{a\}+\tilde{A})$. Besides, note that $\Mult (S\setminus\{a\})=\Mult(S)$. Let
$\tilde{a}\in \tilde{A}$, since $S$ is strong, $a+\tilde{a}-e\in S$ for some $e \in E$. In particular, $a+\tilde{a}-e\notin\{0, a\}$. So, $a+\tilde{a}$ is not a minimal generator of $S\setminus\{a\}$. Analogously, $3a-e\in S\setminus\{a\}$ for some $e\in E$. Hence, $3a$ is not a minimal generator.
\end{proof}

The next result presents a criterion to decide whether $S\setminus\{a\}$ belongs to $\mathcal{ST}(E)$, where $S \in \mathcal{ST}(E)$ and $a \in A$. Note that its conditions can be tested since the minimal generating set of any affine semigroup is finite.

\begin{lemma}\label{SmenosXstrong}
Let $S=\langle E\sqcup A\rangle$ be an affine strong semigroup, and $a\in A$. The semigroup $S\setminus \{a\}$ is a strong semigroup if and only if:
\begin{enumerate}[label={(C\arabic*)}]
\item \label{cond1} For any $e\in E$ such that $a+e= a_1+a_2$ for two distinct elements $a_1,a_2\in A$, then $a+e-e_1\in S$ for some $e_1\in E\setminus\{e\}$.

\item \label{cond2} For any $e\in E$ such that $a+e= 3a_1$ with $a_1\in A$, then $a+e-e_1\in S$ for some $e_1\in E\setminus\{e\}$.
\end{enumerate}
\end{lemma}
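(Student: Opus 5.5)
The plan is to work directly with the definition of strong rather than through minimal generators, since $\Mult(S\setminus\{a\})=E$ and every condition can be traced back to $S$. Write $S'=S\setminus\{a\}$. The first observation: if $x,y\in S'\setminus\{0\}$ are distinct, strongness of $S$ gives some $e\in E$ with $x+y-e\in S$. This witness fails for $S'$ exactly when $x+y-e=a$. Hence $S'$ is strong if and only if the following holds: whenever $x+y=a+e$ with $x,y\in S'\setminus\{0\}$ distinct and $e\in E$, there is $e_1\in E\setminus\{e\}$ with $a+e-e_1\in S$. Note that $e_1\neq e$ already forces $a+e-e_1\neq a$, so membership in $S$ is equivalent to membership in $S'$. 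Both implications then reduce to analysing the decompositions $a+e=x+y$.

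For necessity, I will produce such decompositions from the hypotheses of \ref{cond1} and \ref{cond2}.
\begin{itemize}
\item For \ref{cond1}, suppose $a+e=a_1+a_2$ with distinct $a_1,a_2\in A$. Neither equals $a$: otherwise the other would equal $e\in E$, contradicting $E\cap A=\emptyset$. So $a_1,a_2\in S'$, and the reformulation gives the required $e_1$.
\item For \ref{cond2}, suppose $a+e=3a_1$. Take $x=a_1$ and $y=2a_1$. If $a_1=a$, then $2a=e$, which contradicts that $e$ is a minimal generator. Also $2a_1\neq a$, since $a$ is a minimal generator. So $x,y\in S'\setminus\{0\}$ are distinct, and again the reformulation gives $e_1$.
\end{itemize}

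For sufficiency, let $x+y=a+e$ with $x,y\in S'\setminus\{0\}$ distinct, and expand $x$ and $y$ in $\msg(S)=E\sqcup A$. I split into cases.
\begin{itemize}
\item Some $e'\in E\setminus\{e\}$ occurs in the expansion. Then $a+e-e'\in S$ and we are done.
\item $e$ itself occurs, say $x=e+x'$. Then $a=x'+y$ with $y\neq 0$, so minimality of $a$ forces $x'=0$ and $y=a\notin S'$, which is impossible.
\item Only elements of $A$ occur, and the expansion involves two distinct $b_1,b_2\in A$. Proposition \ref{CheckStrongByGenerators} gives $e'$ with $b_1+b_2-e'\in S$, hence $a+e-e'\in S$. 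If $e'\neq e$ we are done. If $e'=e$, then $a=(b_1+b_2-e)+r$ with $r\in S$. Minimality of $a$ gives either $r=0$ or $b_1+b_2=e$. The latter contradicts $e\in\Mult(S)$, because it would give $b_1\le_{\CaC}e$ with $b_1\neq e$. So $a+e=b_1+b_2$, and \ref{cond1} applies; as before, neither $b_i$ equals $a$.
\item A single $b\in A$ occurs, $x+y=kb$. Since $x\neq y$ we have $k\ge 3$. Proposition \ref{CheckStrongByGenerators} gives $e'$ with $3b-e'\in S$, and the same minimality argument either finishes directly ($e'\neq e$) or forces $k=3$ and $a+e=3b$, where \ref{cond2} applies.
\end{itemize}

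I expect the main obstacle to be the careful bookkeeping in the case $e'=e$. There one must rule out degenerate equalities such as $b_1+b_2=e$, $b=a$, or $2b=a$, using that $e$ is minimal for $\le_{\CaC}$ and that $a$ is a minimal generator. One must also ensure the witness lands in $S'$ rather than merely in $S$.
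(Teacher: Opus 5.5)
Your proof is correct, and its sufficiency direction takes a genuinely different route from the paper's.

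The paper first invokes Lemma~\ref{SistMin_S_menos_a}, which itself rests on an external description of the generators of $S\setminus\{a\}$, to get $\msg(S\setminus\{a\})\setminus E\subseteq (A\setminus\{a\})\cup\{2a\}\cup(\{a\}+E)$. It then verifies the criterion of Proposition~\ref{CheckStrongByGenerators} for $S\setminus\{a\}$ combination by combination. Strongness of $S$ places every required element $u+v-e$ or $3u-e$ in $S$. Direct inspection then shows that only $a_1+a_2-e$ and $3a_1-e$, with $a_1,a_2\in A\setminus\{a\}$, can coincide with $a$, and these are exactly the cases that (C1) and (C2) repair.

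You never determine $\msg(S\setminus\{a\})$. You first isolate the only possible failure, a witness with $x+y-e=a$. You then redo the expansion argument of Proposition~\ref{CheckStrongByGenerators} with respect to the generators of $S$. In the bad case, the fact that $a$ and $e$ are minimal generators forces $a+e=b_1+b_2$ or $a+e=3b$.

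The necessity directions are essentially the same in both proofs. You do, however, explicitly check degenerate cases that the paper leaves implicit: $a_i\neq a$ in (C1), and $a_1\neq a$ and $2a_1\neq a$ in (C2).

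Your route is self-contained and more elementary. It needs only the definition, the atom property of minimal generators, and the trivial half of Proposition~\ref{CheckStrongByGenerators}. The paper's route reuses machinery it has already built and phrases the test as a finite check over generator combinations. That matches how the lemma is used in the tree algorithm of Section~\ref{sec:ST}, where the generators of $S\setminus\{a\}$ must be computed anyway.
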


\begin{proof}
By Lemma \ref{SistMin_S_menos_a}, the set $A'=\msg (S\setminus\{a\})\setminus E$ is a subset of $(A\setminus\{a\})\cup \{2a\} \cup (\{a\}+E)$.
The semigroup $S\setminus\{a\}$ is strong if and only if the set $A'$ satisfies the conditions stated in Proposition \ref{CheckStrongByGenerators}. Let $a_1,a_2\in A\setminus\{a\}$ and $x,y\in \{a\}+ E$ with $a_1\neq a_2$ and $x\neq y$, since $S$ is strong there exist $e_1,\ldots, e_9 \in E$ such that the elements
\[
\begin{array}{rrr}
a_1+a_2-e_1, & 2a+2a-e_2, & x+y-e_3, \\
a_1+2a-e_4, & a_1+x-e_5, & 2a+x-e_6, \\
3a_1-e_7, & 3(2a)-e_8, & 3x-e_9
\end{array}
\]
belong to $S$.
Since $E \sqcup A$ is the minimal generating set of $S$, none of the above elements can be equal to $a$, except $a_1+a_2-e_1$ and $3a_1-e_7$.
So, it remains to analyse the uncovered cases.
Suppose $a_1 + a_2 - e_1 = a$ and/or $3a_1-e_7=a$. If $S \setminus \{a\}$ is strong, then there exist $e,e'\in E$ such that $a_1 + a_2 - e,3a_1-e'\in S\setminus\{a\}$, and Conditions \ref{cond1} and \ref{cond2} hold.
Conversely, assume that Conditions~\ref{cond1} and~\ref{cond2} are satisfied. Then all the required combinations of elements in $A'$ verify the hypothesis of Proposition~\ref{CheckStrongByGenerators}. Therefore, $S\setminus\{a\}$ is strong.
\end{proof}

In particular, the conditions stated in Lemma \ref{SmenosXstrong} may not be satisfied, or the set $A$ might be empty. The next two examples illustrate both cases, respectively.

\begin{example}\label{EjemploArbolNoInfinito}
Consider the $\CaC$-semigroup $S$ minimally generated by the disjoint union of
\[
E=\{(1,7), (1,8), (2,2), (6,1), (7,1)\},
\]
and
\begin{multline*}
A=\{
(2,8), (2,9), (2,10), (2,11), (2,12), (2,13), (3,3), (3,4), (3,5), \\ (3,6), (3,7), (3,8), (3,11), (3,12), (3,13), (3,14), (4,3), (4,5), (4,6),\\ (4,7), (4,8), (4,9), (5,3), (5,4), (6,3), (6,4), (7,2), (7,3), (7,4), (8,2),\\ (8,4), (9,2), (10,2), (10,3), (11,2), (11,3), (12,3) \}.
\end{multline*}
By applying Proposition \ref{CheckStrongByGenerators}, it follows that $S$ is strong.
Notice that $S\setminus \{ (4,5)\}$ is not strong, since it does not verify the conditions of Lemma \ref{SmenosXstrong}, specifically observe that $(4,5)+ (2,2)=(3,3)+(3,4)$, but $(3,3)+(3,4)-m\notin S$ for every $m\in E \setminus \{ (2,2)\}$.
\end{example}

\begin{example}\label{EjemploAvacio}
Let $E=\{(1,1), (1,2), (3,1), (4,1)\}$ be an incomparable set, the strong $\CaC$-semigroup $R(E)$ is minimally generated by $E$. Hence, $A=\emptyset$, and thus there is no strong $\CaC$-semigroup strictly contained in $R(E)$.
\end{example}

Given an incomparable set  $E\subset\N^p$, we can provide a procedure to describe all the elements of $\mathcal{ST}(E)$, through an associated tree rooted in $R(E)$. We define this tree $G(E)$ as the graph with vertex set  $\mathcal{ST}(E)$, and the pair $(S,T)\in \mathcal{ST}(E)^2$ is an edge  if $T=S\cup \{\Fb_E(S)\}$. Equivalently, the sons of any $S\in\mathcal{ST}(E)$ are the semigroups $S\setminus\{x\}\in\mathcal{ST}(E)$, with $x\in\msg (S)\setminus E$ and $\Fb_E(S)\prec x$. This graph generalizes the one introduced in \cite{Strong nuevo de JC} for numerical semigroups. While the tree $G(E)$ depends on the fixed monomial order, the set $\mathcal{ST}(E)$ does not. Example \ref{ejemploTree} shows this fact.

Note that $\mathcal{ST}(E)$ may be infinite. To obtain an algorithm, we restrict to semigroups of bounded genus as an auxiliary finiteness condition. Hence, Algorithm~\ref{AlgoritmoStrongTree} computes all elements of $\mathcal{ST}(E)$ up to a given genus.

\begin{algorithm}[H]\label{AlgoritmoStrongTree}
\caption{Computing the set $\{S\in\mathcal{ST}(E)\mid \g(S)\le g\}$.}\label{ComputeST}
\KwIn{An incomparable set $E\subset \N^p$, and a non-zero positive integer $g$.}
\KwOut{All the strong $\CaC(E)$-semigroups with genus at most $g$.}
$B \leftarrow \{R(E)\}$\;
\If{$g< \g (B)$}
    {\Return $\emptyset$\;}
\If{$g= \g (B)$}
    {\Return $\{R(E)\}$\;}
$X\leftarrow B$\;
$i\leftarrow \g (B)$\;

\While {$B\ne\emptyset$ and $i\le g$}{
    $Y \leftarrow \emptyset$\;
    $C\leftarrow B$\;
    \While{$C\neq \emptyset$ }
    {$T \leftarrow \text{First}(C)$\;
    $A \leftarrow \msg(T)\setminus E$\;
    $D \leftarrow \{a\in A \mid a \text{ satisfies Lemma \ref{SmenosXstrong} and }\Fb_E(S)\prec x\}$\;\label{lineaCondiciones}
    $Y \leftarrow Y\cup\{T\setminus\{a\}\mid a\in D\}$\;
    $C \leftarrow C\setminus \{T\}$\;
    }
    $B \leftarrow Y$\;
    $X \leftarrow X\cup Y$\;
    $i \leftarrow i+1$\;
    }
\Return{$X$}
\end{algorithm}

This section concludes by providing an example to illustrate Algorithm \ref{AlgoritmoStrongTree}.

\begin{example}\label{ejemploTree}
Consider the integer cone $\CaC$ generated by $\{(3,2), (7,1)\}$, and let $E\subset \CaC$ be the set $\{(2, 1), (4, 1), (5, 1), (6, 4), (7, 1)\}$, thus $\msg(R(E))=E\sqcup A$ where $A=\{(5, 2), (5, 3), (9, 6), (13, 2)\}$. The set of gaps of $R(E)$ is $\{(3, 1), (3, 2), (6, 1)\}$. Figure \ref{fig:ejemplotree} shows a graphical representation of $R(E)$. The empty circles represent the gaps of $R(E)$, the blue squares denote its minimal generators, and the red circles represent some elements in it.
\begin{figure}[h]
    \centering
    \includegraphics[scale=.45]{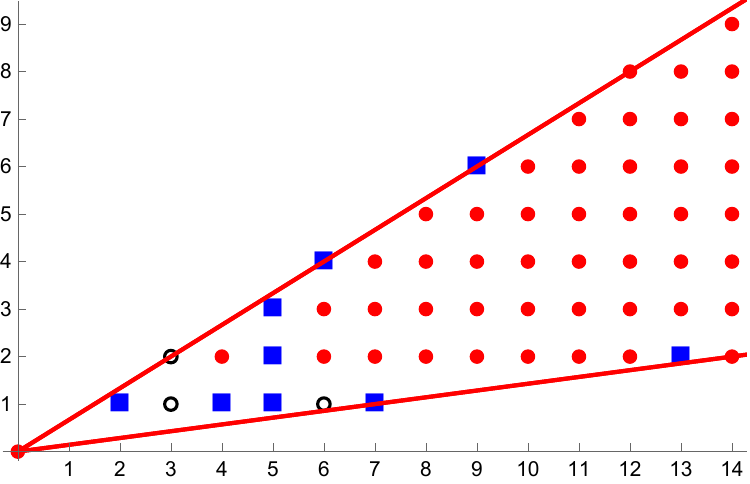}
    \caption{The set $R(\{(2, 1), (4, 1), (5, 1), (6, 4), (7, 1)\})$.}
    \label{fig:ejemplotree}
\end{figure}
Applying Algorithm~\ref{AlgoritmoStrongTree} with $g=6$, we obtain 35 strong semigroups of genus at most $6$.
Figure \ref{fig:ejemplotree1} shows the tree of these strong semigroups with respect to the graded lexicographical order.
Let $\preceq_1$ be the monomial order on $\mathbb{N}^2$ defined as follows. Given $x=(x_1,x_2),\, y=(y_1,y_2)\in \mathbb{N}^2$, we say that
$x \preceq_1 y$ if $x_1+7x_2 < y_1+7y_2$, or if $x_1+7x_2 = y_1+7y_2$ and $x_2 \le y_2$.
Figure \ref{fig:ejemplotree2} corresponds with the tree obtained if the monomial order $\preceq_1$ is considered.
Note that while both trees share the same vertex set, they are not equal.
The root of both trees is $R(E)$, and each vertex is labelled with the corresponding removed minimal generator. For example, in Figure \ref{fig:ejemplotree1}, the rightmost node $(20,3)$ in the last level of the tree is the strong semigroup $R(E) \setminus \{(3,2),(20,3)\}$.

\begin{figure}[h]
    \centering
    \includegraphics[scale=.4]{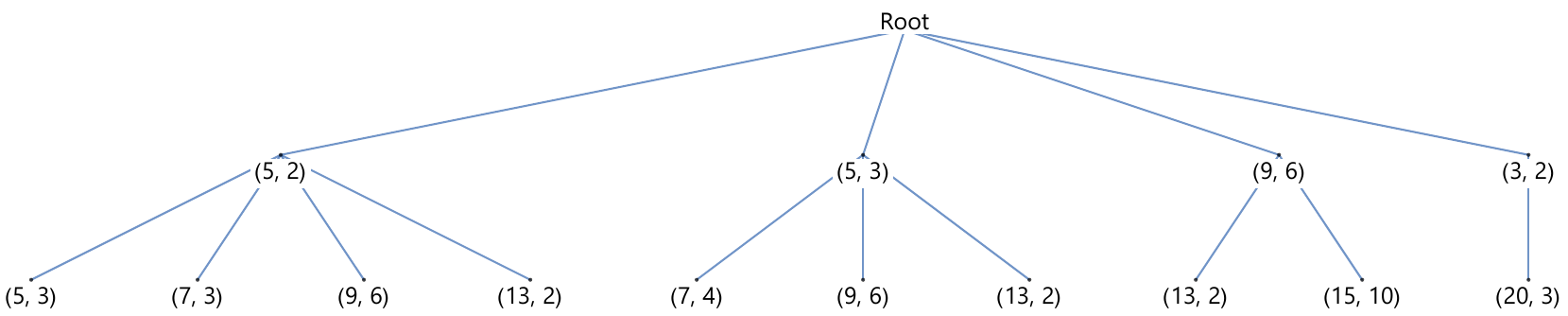}
    \caption{The tree with respect to the graded lexicographical order.}
    \label{fig:ejemplotree1}
\end{figure}
\begin{figure}[h]
    \centering
    \includegraphics[scale=.4]{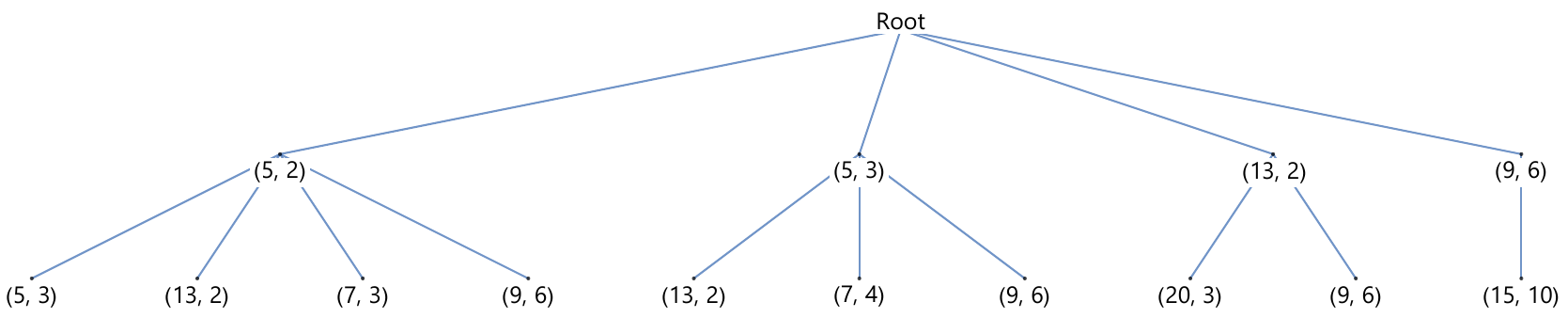}
    \caption{The tree with respect to the monomial order $\preceq_1$.}
    \label{fig:ejemplotree2}
\end{figure}
\end{example}

\section{Finiteness of the set $\mathcal{ST}(E)$}\label{sec:strongMultGenus}
As in the previous section, we fix an incomparable set $E\subset \N^p$. Now, we study the finiteness of the set $\mathcal{ST}(E)$. It is equivalent to characterize when the tree $G(E)$ is finite.
Since $R(E)$ is maximal with respect to the inclusion, there does not exist a strong $\CaC$-semigroup with set of multiplicities $E$ and genus strictly smaller than $\g(R(E))$. Accordingly, throughout this section, we fix an integer $g \ge \g(R(E))$.

If $E=\{m\}\subset \N\setminus\{ 0\}$,  it is shown in \cite{Strong nuevo de JC} that
\begin{multline}\label{NumericalStrong}
L(m,g)=\Big\{0,m,2m, \ldots ,\Big\lfloor \frac{g}{m-1} \Big\rfloor m,\\ \Big\lfloor \frac{g}{m-1} \Big\rfloor m+\big(g\mod (m-1)\big)+1,\to  \Big\},
\end{multline}
is a strong numerical semigroup of genus $g$, where $g\geq m-1$. The symbol $\to$ is used to denote that every integer greater than or equal to $\Big\lfloor \frac{g}{m-1} \Big\rfloor m+\big(g\mod (m-1)\big)+1$ belongs to the set. Hence, there exist infinite strong numerical semigroups with multiplicity $m$.
In contrast, for a non-numerical $\CaC$-semigroup, the set $\mathcal{ST}(E)$ can be finite, as illustrated in Example \ref{EjemploArbolNoInfinito} and Example \ref{EjemploAvacio}.
Inspired by the numerical semigroup \eqref{NumericalStrong}, the next result establishes the existence of strong $\CaC$-semigroups with genus $g \geq g(R(E))$ for certain sets $E$, proving that the set $\mathcal{ST}(E)$ can be infinite.

\begin{proposition}\label{EconAlgunRayoNoCompleto}
Let $E$ be an incomparable set such that $\tau\cap \N^p\neq \tau\cap R(E)$ for some extremal ray $\tau$ of $\CaC(E)$, $n_\tau$ be the minimal generator of $\tau\cap \N^p$, and  $m=min\{k\in\N\setminus\{0\}\mid kn_\tau\in R(E)\}$. Then, for any integer $\tilde{g}\ge m-1$,
\[
T=(R(E)\setminus \tau) \sqcup L_\tau(L(m,\tilde{g}))\in \mathcal{ST}(E),
\]
and $\g(T)=\g(R(E))-(m-1)+\tilde{g}$, where $L_\tau(L(m,\tilde{g}))=\{kn_\tau\mid k\in L(m,\tilde{g})\}$.
\end{proposition}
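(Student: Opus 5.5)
The plan is to verify directly the four properties needed: $T$ is a $\CaC$-semigroup, $\Mult(T)=E$, $T$ is strong, and the genus formula holds. Everything rests on one structural fact, which I expect to be the main point requiring care: since $\tau$ is a face of $\CaC=\CaC(E)$, for $x,y\in\CaC$ we have $x+y\in\tau$ if and only if $x,y\in\tau$. This follows from the supporting-hyperplane description \eqref{coneHyperplanes}, arguing as in the proof that strongness passes to faces: if $h(x)+h(y)=0$ with $h(x),h(y)\ge 0$, then $h(x)=h(y)=0$.

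\emph{Step 1: identify $R(E)\cap\tau$.} Write $z=e+c\in\tau$ with $e\in E$ and $c\in\CaC$. By the face property, $e,c\in\tau$. Because $E$ is incomparable, $E\cap\tau$ contains at most one element, namely the smallest multiple of $n_\tau$ lying in $E$. By the definition of $m$, this element is $mn_\tau$. Hence
\[
R(E)\cap\tau=\{0\}\cup\{kn_\tau\mid k\ge m\}.
\]
In particular, $\tau\cap\N^p\setminus R(E)=\{n_\tau,\dots,(m-1)n_\tau\}$. Since $L(m,\tilde g)$ is a numerical semigroup of multiplicity $m$ and genus $\tilde g$, the set $L_\tau(L(m,\tilde g))$ lies in $R(E)\cap\tau$ and contains $mn_\tau$. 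The gaps of $T$ are then the gaps of $R(E)$ off $\tau$, together with $\tilde g$ gaps on $\tau$. This gives $\g(T)=\g(R(E))-(m-1)+\tilde g$, and in particular $\CaH(T)$ is finite.

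\emph{Step 2: $T$ is a semigroup with $\Mult(T)=E$.} Take $x,y\in T$. If both lie on $\tau$, then $x+y\in L_\tau(L(m,\tilde g))$ because $L$ is closed under addition. Otherwise, by the face property, $x+y\notin\tau$; since $x+y\in R(E)$, we get $x+y\in R(E)\setminus\tau\subseteq T$. Next, $E\subseteq T$, because $E\setminus\tau\subseteq R(E)\setminus\tau$ and $E\cap\tau=\{mn_\tau\}\subseteq L_\tau(L)$. Moreover $T\subseteq R(E)$, so every non-zero element of $T$ lies in $E+\CaC$. It follows that $\Mult(T)=\min_{\le_\CaC}(T\setminus\{0\})=E$ and $\CaC(T)=\CaC$.

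\emph{Step 3: strongness.} Let $x\ne y$ be non-zero elements of $T$. We split into three cases according to how many of $x,y$ lie on $\tau$.
\begin{enumerate}
\item If $x,y\in\tau$, then $x=an_\tau$ and $y=bn_\tau$ with $a\ne b$ in $L(m,\tilde g)$. Since $L(m,\tilde g)$ is strong with multiplicity $m$, we have $a+b-m\in L$. Hence $x+y-mn_\tau\in T$, and $mn_\tau\in E$.
\item If exactly one of them lies on $\tau$, say $y\in\tau$, then $y-mn_\tau\in\CaC$ because $y=kn_\tau$ with $k\ge m$. The element $x+(y-mn_\tau)$ lies in $E+\CaC\subseteq R(E)$ and is off $\tau$ by the face property. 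So it belongs to $T$, and we can subtract $e=mn_\tau$.
\item If neither lies on $\tau$, write $x=e+c$ with $e\in E$ and $c\in\CaC$. Then $x+y-e=c+y$. This element lies in $R(E)$, because $y\in E+\CaC$, and it is off $\tau$ because $y\notin\tau$. Hence it belongs to $T$.
\end{enumerate}
In every case some $e\in E$ satisfies $x+y-e\in T$, so $T$ is strong and $T\in\mathcal{ST}(E)$.
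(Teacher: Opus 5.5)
Your proof is correct and is essentially the paper's argument: the face property of $\tau$ combined with the strongness of $L(m,\tilde g)$ on the ray. The only difference is the case split — you organize the strongness check by how many of $x,y$ lie on $\tau$ and build the witness $e$ directly, whereas the paper starts from the witness given by the strongness of $R(E)$ and replaces it by $mn_\tau$ when $x+y-e$ falls on $\tau$; you also make explicit points the paper only asserts, namely $E\cap\tau=\{mn_\tau\}$, the closure of $T$ under addition, $\Mult(T)=E$, and the gap count.
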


\begin{proof}
Since $L_\tau(L(m,\tilde{g}))\subset \tau$, then $T=(R(E)\setminus \tau) \sqcup L_\tau(L(m,\tilde{g}))$ is a $\CaC$-semigroup and $\g (T)=\g(R(E))-(m-1)+\tilde{g}$. Note that $E\cap \tau=\{mn_\tau\}$, and $\Mult(T)=E$.

For any $x,y\in T\subset R(E)$ with $x\neq y$, there exist $e\in E$ such that $x+y-e\in R(E)$. If $x+y-e\notin \tau$, then $x+y-e\in T$, and the assertion holds.
Suppose $x+y-e\in \tau$. If $e=m n_\tau$, then $x,y\in \tau$. Hence, $x=k_1 n_\tau$ and $y=k_2 n_\tau$ for some $k_1,k_2\in L(m,\tilde{g})$. Since $L(m,\tilde{g})$ is strong, $k_1+k_2-m\in L(m,\tilde{g})$, and thus,  $x+y-e=(k_1+k_2-m)n_\tau\in L_\tau(L(m,\tilde{g}))$. When $e\neq m n_\tau$, since $x+y-e\in \tau\cap R(E)$, then $x+y-e=mn_\tau+kn_\tau$ for some integer $k\ge 0$. So, $x+y-mn_\tau=e+kn_\tau\in T$.
\end{proof}

To exemplify the previous result, consider the following example.

\begin{example}
Let $\CaC$ and $E$ be the integer cone and the incomparable set given in Example~\ref{ejemploTree}, respectively. We aim obtain a strong $\CaC$-semigroup $T \in\mathcal{ST}(E)$ with $g(T)=5$.
Recall that $\CaH(R(E))=\CaC\setminus R(E)$. Observe that $\tau \cap \CaH(R(E)) \neq \emptyset$ for the extremal ray $\tau=\langle(3,2)\rangle$, as illustrated in Figure~\ref{fig:ejemplotree}. By applying Proposition \ref{EconAlgunRayoNoCompleto}, we obtain $g(T)=3-1+\tilde{g}=5$. Therefore, $L(2,3)=\{0,2,4,6,\to\}$, and consequently $T=R(E)\setminus\{(9,6),(15,10)\}$.
\end{example}

If $\tau \cap \N^p = \tau \cap R(E)$ for every extremal ray $\tau \in \CaC(E)$, then, for a fixed incomparable set $E$, the finiteness of the set $\mathcal{ST}(E)$ remains unsolved. To analyse the above situation, we present some objects.
In \cite{check_C_semigr}, given an affine semigroup $S\subseteq \N^p$, some special lines and sets related to the extremal rays of $\CaC(S)\subseteq \N^p$ are introduced. As previously discussed, the cone $\CaC(S)$ satisfies \eqref{coneHyperplanes}, and let $\Omega$ be the set of extremal rays of $\CaC(S)$. We define
\[
\CaA=\left\{ \sum_{\tau\in \Omega} \lambda_\tau n_\tau \mid 0\le \lambda_\tau\le 1,\mbox{ and } \tau\cap \N^p = \langle n_\tau\rangle \right\}\cap \N^p.
\]
Each extremal ray $\tau\in \Omega$ is determined by $h_\tau^{(1)}(x)=\cdots =h_\tau^{(p-1)}(x)=0$ supported hyperplanes.
Given $\alpha=(\alpha_1, \ldots, \alpha_{p-1})\in \Z^{p-1}$, the $\alpha$-parallel line to $\tau$ given by the solutions of the linear equations $\bigcup _{i=1}^{p-1} \{h_\tau^{(i)}(x)=\alpha_i\}$ is denoted by $\upsilon_{\tau}(\alpha)$. For every integer point $z\in \Z^p$, there exists $\alpha \in \Z^{p-1}$ such that $z$ belongs to $\upsilon_{\tau}(\alpha)$. Note that if $z\in \CaC(S)$, then $\alpha \in \N^{p-1}$. We denote by $\Upsilon _\tau(z)$ the element $(h_\tau^{(1)}(z),\ldots ,h_\tau^{(p-1)}(z))\in\N^{p-1}$ with $z\in\CaC (S)$. Hence, for any $z\in\CaC (S)$, $z\in \upsilon_\tau(\alpha)$ if and only if $\alpha =\Upsilon_\tau(z)$. Consider
$ T_\tau\subset\N^{p-1}$ the affine semigroup generated by the finite set $\{\Upsilon_\tau(z)\mid z\in \CaA\}$ and let $\Gamma_\tau$ be its minimal generating set.
To simplify the notation, we call special line of the cone $\CaC(S)$ to any line $\upsilon_\tau(\alpha)$ with $\alpha \in \Gamma_\tau$ and $\tau \in \Omega$.
Now, we use the aforementioned framework to study the existence of strong $\CaC$-semigroups with genus $g \geq \g(R(E))$.

\begin{proposition}\label{AconLineaMinimal}
Let $E$ be an incomparable set such that $\tau\cap \N^p =  \tau\cap R(E)$ for every extremal ray $\tau$ of $\CaC(E)$, and let $E\sqcup A$ be the minimal generating set of $R(E)$. Assume that there exists $a\in A$ such that $\Upsilon _{\tau '}(a)\in \Gamma _{\tau '}$ for some $\tau'=\langle n \rangle$ extremal ray of $\CaC(E)$. Then,
\[
T=R(E)\setminus\{a,a+n,\ldots , a+\tilde{g}n\}\in \mathcal{ST}(E),
\]
and $g(T)=\tilde{g}+\g(R(E))$ for any $\tilde{g}\in \N$.
\end{proposition}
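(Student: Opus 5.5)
The plan is to verify directly that $T$ is a $\CaC$-semigroup with $\Mult(T)=E$ and the stated genus, and then that it is strong. Everything rests on one \emph{line argument}. The map $\Upsilon_{\tau'}$ is additive on $\CaC=\CaC(E)$. Moreover, $\Upsilon_{\tau'}(z)\in T_{\tau'}$ for every $z\in\CaC$, because every point of $\CaC$ is a point of $\CaA$ plus a non-negative integer combination of the ray generators $n_\tau$. Here $\Upsilon_{\tau'}(n)=0$, and $\Upsilon_{\tau'}(n_\tau)\in T_{\tau'}$ for the other rays, since $n_\tau\in\CaA$. Since $\Upsilon_{\tau'}(a)\in\Gamma_{\tau'}$ is a minimal generator of $T_{\tau'}$, any decomposition $z+w=a+kn$ with $z,w\in\CaC$ forces $\Upsilon_{\tau'}(z)=0$ or $\Upsilon_{\tau'}(w)=0$, i.e.\ one of $z,w$ lies on $\tau'$. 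Note also that $n\in R(E)$ by the hypothesis $\tau'\cap\N^p=\tau'\cap R(E)$, and minimality then forces $n\in E$. Finally, $\Upsilon_{\tau'}(a)\neq 0$, so $a\notin\tau'$.

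First I will show that $T$ is a submonoid. Suppose $x,y\in T\setminus\{0\}$ with $x+y=a+kn$ and $0\le k\le\tilde g$. By the line argument we may assume $y=jn$ with $j\ge1$, so $x=a+(k-j)n$. If $k-j\ge0$, then $x$ is one of the removed points, which is a contradiction. If $k-j<0$, then $a=x+(j-k)n$ with $x\in R(E)\setminus\{0\}$ and $n\in E$, which contradicts $a\in\msg(R(E))$.

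Next I will check that no element of $E$ is removed. If $e=a+kn$ with $e\in E$, then $e\ge_{\CaC}a\ge_{\CaC}e'$ for some $e'\in E$, and incomparability of $E$ gives $e=e'$ and then $a=e$, which is absurd. Hence $\Mult(T)=E$. Since only finitely many points are removed, $T$ is a $\CaC$-semigroup. Its gaps are those of $R(E)$ together with the removed points $a+kn$, which are pairwise distinct. I will count them carefully against the stated formula $\g(T)=\tilde g+\g(R(E))$, since the displayed set has $\tilde g+1$ elements and the indexing convention needs to be checked.

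For strongness, take $x\ne y$ in $T\setminus\{0\}$ and write $x=e+c$ and $y=e'+c'$ with $e,e'\in E$ and $c,c'\in\CaC$. There are two cases.
\begin{itemize}
\item \textbf{Case 1: $\Upsilon_{\tau'}(y)=0$ or $\Upsilon_{\tau'}(x)=0$.} Say $y=jn$ with $j\ge1$, and use $n\in E$. Then $x+y-n=x+(j-1)n$. If $x$ is off the line $\upsilon_{\tau'}(\Upsilon_{\tau'}(a))$, so is this element. If $x$ is on it, then $x=a+in$ with $i>\tilde g$, and the shifted element is also beyond the removed segment. Either way it lies in $T$.
\item \textbf{Case 2: both $\Upsilon_{\tau'}(x)$ and $\Upsilon_{\tau'}(y)$ are nonzero.} Try $x+y-e=c+y\in R(E)$. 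If it is a removed point, the line argument applied to $c+y$ gives $\Upsilon_{\tau'}(c)=0$. Then $\Upsilon_{\tau'}(a)=\Upsilon_{\tau'}(x)+\Upsilon_{\tau'}(y)-\Upsilon_{\tau'}(e)$, so $\Upsilon_{\tau'}(a)$ is written as a sum of two nonzero elements of $T_{\tau'}$, namely $\Upsilon_{\tau'}(e)$ and $\Upsilon_{\tau'}(y)$. This contradicts minimality, so $c+y\in T$.
\end{itemize}

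I expect the main obstacle to be the justification that $\Upsilon_{\tau'}(\CaC)\subseteq T_{\tau'}$. This is what makes minimality in $\Gamma_{\tau'}$ usable. It depends on the decomposition of cone points via $\CaA$ (as in \cite{check_C_semigr}), and on $\Upsilon_{\tau'}$ being integer-valued and linear on the lattice. I would first try to establish this carefully from the hyperplane description \eqref{coneHyperplanes}.
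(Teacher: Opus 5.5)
\textbf{The first half of your plan.} The closure of $T$ and $\Mult(T)=E$ follow the paper's argument. The paper phrases closure as ``$a+in$ is a minimal generator of $R(E)\setminus\{a,\dots,a+(i-1)n\}$'', but the content is the same line argument plus $a\in\msg(R(E))$. Your decomposition of a cone point as a point of $\CaA$ plus an $\N$-combination of the $n_\tau$ (floor the real coefficients) correctly gives $\Upsilon_{\tau'}(\CaC)\subseteq T_{\tau'}$. The paper uses this fact without comment.

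\textbf{The error in Case~2.} The last step of Case~2 is wrong. Once the line argument gives $\Upsilon_{\tau'}(c)=0$, you have $\Upsilon_{\tau'}(x)=\Upsilon_{\tau'}(e)$. So your identity $\Upsilon_{\tau'}(a)=\Upsilon_{\tau'}(x)+\Upsilon_{\tau'}(y)-\Upsilon_{\tau'}(e)$ collapses to $\Upsilon_{\tau'}(a)=\Upsilon_{\tau'}(y)$. It does not write $\Upsilon_{\tau'}(a)$ as $\Upsilon_{\tau'}(e)+\Upsilon_{\tau'}(y)$, and it does not contradict $\Upsilon_{\tau'}(a)\in\Gamma_{\tau'}$. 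It only says that $y$ lies on the line $\upsilon_{\tau'}(\Upsilon_{\tau'}(a))$, and elements of $T$ may well lie there. As written, the claimed contradiction is not there.

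\textbf{The repair.} The case closes for a different reason, using only what you have already proved. $\Upsilon_{\tau'}(c)=0$ means $c=jn$ with $j\ge 0$. Moreover $\tau'\cap\N^p\subseteq T$: it lies in $R(E)$ by hypothesis, and no removed point lies on $\tau'$, since their $\Upsilon_{\tau'}$-value is $\Upsilon_{\tau'}(a)\neq 0$. Since $T$ is a monoid, $c+y\in T$. Equivalently, $y=a+(k-j)n$ with $k\le\tilde g$ is excluded exactly as in your submonoid step.

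\textbf{Smaller points.} Your submonoid step is also what rules out $i<0$ in Case~1, where you assert $x=a+in$ with $i>\tilde g$; say so explicitly. On the genus, your suspicion is right. The $\tilde g+1$ removed points are distinct elements of $R(E)$, so $\g(T)=\g(R(E))+\tilde g+1$. The formula in the statement is off by one, and the paper's proof never checks the count, so state the corrected value.

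\textbf{Comparison with the paper.} With this repair, your strongness proof is a genuinely different route from the paper's. The paper takes some $e$ given by strongness of $R(E)$. If $x+y-e=a+in$ is removed, it switches to the $e'$ with $a=e'+z$, $z\in\CaH(R(E))$, and uses that $e,e'$ are minimal generators to get $x+y-e'\in T$. That switch gives nothing when $e'=e$ (the case $j=i$ in the paper's proof), which needs one more use of the line argument: minimality of $\Upsilon_{\tau'}(a)$ forces $e'=n$, and then $x+y=a+(i+1)n$ is impossible. Your explicit choice of $e$ needs no switch: take $n$ when $x$ or $y$ lies on $\tau'$, and otherwise the $E$-part of $x$. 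Since your argument never uses $x\neq y$, it even shows that $T$ is special strong.
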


\begin{proof}
To prove that $T$ is a $\CaC$-semigroup, let us see that $a+in\in \msg(R(E)\setminus\{a,a+n,\ldots ,a+(i-1)n\})$ for all $i\in\{1, 2, \ldots, \tilde{g}\}$. Suppose that $a+in=x+s$ with $x,y\in R(E)\setminus\{a,a+n,\ldots ,a+(i-1)n\}$ non-null. So, $\Upsilon_{\tau'}(a+in)=\Upsilon_{\tau'} (a)= \Upsilon_{\tau'}(x)+\Upsilon_{\tau'}(y)$. The hypothesis $\Upsilon _{\tau '}(a)\in \Gamma _{\tau '}$ means $\Upsilon_{\tau'}(x)=0$ or $\Upsilon_{\tau'}(y)=0$. Without loss of generality, we assume that $\Upsilon_{\tau'}(x)=0$, and  thus $\Upsilon_{\tau'} (a)=\Upsilon_{\tau'}(y)$. Hence, $x=kn$ for some $k\in \N\setminus \{0\}$, and $y=a+\lambda n$ for some $\lambda\ge i$. Since $a+in=x+y$, then $(k+\lambda -i)n=0$, which is not possible.

By definition, an element $e$ belongs to $E$ if and only if $e \in R(E)=T \cup \{a+in \mid 0 \leq i \leq \tilde{g}\}$ and there does not exist $z \in R(E)$ such that $e - z \in \CaC$.
If $e \in T$, then $e \in \Mult(T)$.  If $e \notin T$, then $e = a + in$ for some $0 \leq i \leq \tilde{g}$, which contradicts the minimality of $e$ with respect to $\leq_{\CaC}$. So, $E=\Mult(T)$.

Now, consider two elements $x,y\in T\setminus\{0\}\subseteq R(E)\setminus\{0\}$ such that $x\neq y$.  Since $R(E)$ is strong, there exists $e\in E$ with $x+y-e\in R(E)$. If $x+y-e\in T$, then $T\in \mathcal{ST}(E)$. Otherwise, $x+y-e=a+in$ for some $i\in \{0,1,\ldots, \tilde{g}\}$.
By definition of the set $A$, $a=e'+z$ for some $z\in \CaH(R(E))$ and $e'\in E$. So, $x+y-e=z+in+e'$, and $x+y-e'=z+e+in$. Suppose that $z+e+in\notin T$. Then, $z+e+in=a+jn=e+'z+jn$ for some $j\in\{0,1,, \ldots, \tilde{g}\}$ and $e=(j-i)n+e'$.
Depending on the value of $j-i$, either $e$ or $e'$ is not a minimal generator, a contradiction. We conclude that $x+y-e'=z+e+in\in T$.
\end{proof}

The following example is a direct application of Proposition \ref{AconLineaMinimal}.

\begin{example}
Let $\CaC$ again be the integer cone given in Example~\ref{ejemploTree}, and consider the incomparable set
$$E=\{(3,1), (3,2), (4,1), (4,2), (5,1), (6,1), (7,1)\}.$$
Thus, $R(E)$ is minimally generated by $E\sqcup A$ with $A=\{(5,2), (5,3)\}$, and $\Upsilon_\tau\big((5,3)\big)\in \Gamma_\tau$ where $\tau$ is the extremal ray generated by $(3,2)$. So, $R(E)$ satisfies Proposition \ref{AconLineaMinimal} for $a=(3,2)$, and $T=R(E)\setminus\{a,a+(3,2),a+2(3,2),a+3(3,2)\}\in\mathcal{ST}(E)$ has genus equals four (note that $\Mult(T)=\{(3,1), (3,2), (4,1), (4,2), (5,1), (6,1), (7,1)\}$, and $\msg(T)\setminus\Mult(T)=\{(5,2), (14,9)\}$).
\end{example}

Proposition \ref{AconLineaMinimal} implies that $\mathcal{ST}(E)$ is infinite if its hypotheses hold.
Note that the only remaining case for determining whether $\mathcal{ST}(E)$ is finite occurs when $E$ is an incomparable set such that $\tau \cap \N^p = \tau \cap R(E)$ for every extremal ray $\tau$ of $\CaC(E)$, and there exists no element in $A$ belonging to any special line of $\CaC$.
In general, by \cite[Theorem 9]{check_C_semigr}, an affine semigroup $S$ is a $\CaC$-semigroup if and only if
\begin{itemize}
\item  $(\tau\cap \CaC)\setminus  S$ is finite for every extremal ray $\tau$ of $\CaC$.
\item There exists at least a minimal generator of $S$ belonging to each special line of $\CaC$.
\end{itemize}
So, if $S$ is minimally generated by $E\sqcup A$ such that no elements of $A$ lie on the special lines of $\CaC$, then every special line of $\CaC$ contains an element of $E$.
Moreover, $\langle E\rangle$ is a $\CaC$-semigroup if and only if every extremal ray of $\CaC$ is generated by an element in $E$. Hence, in that case, $\CaC\setminus S\subseteq\CaC\setminus \langle E\rangle$ and then the set of affine semigroups with set of multiplicities $E$ is finite.
In particular, when the set of affine semigroups with set of multiplicities $E$ is finite, $\mathcal{ST}(E)$ is also finite.
The last results of this section characterize these facts.

\begin{theorem}Let $E$ be an incomparable set. The following are equivalent:
\begin{itemize}
\item[{\rm (a)}] $\langle E \rangle$ is a $\CaC$-semigroup.
\item[{\rm (b)}] The set of affine semigroups with set of multiplicities $E$ is finite.
\item[{\rm (c)}] The set of strong affine semigroups with set of multiplicities $E$ is finite.
\end{itemize}
\end{theorem}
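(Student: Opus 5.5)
The plan is to prove the cycle (a)$\Rightarrow$(b)$\Rightarrow$(c)$\Rightarrow$(a). Throughout I read ``affine semigroups with set of multiplicities $E$'' as $\CaC$-semigroups with spanning cone $\CaC=\CaC(E)$, in line with the preceding discussion.

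\emph{(a)$\Rightarrow$(b).} Every affine semigroup $S$ with $\Mult(S)=E$ satisfies $\langle E\rangle\subseteq S\subseteq R(E)\subseteq \CaC$. Hence $S$ is determined by the subset $S\setminus\langle E\rangle$ of $\CaC\setminus\langle E\rangle$. If $\langle E\rangle$ is a $\CaC$-semigroup, this complement is finite, so it has only finitely many subsets, and therefore there are only finitely many such $S$.

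\emph{(b)$\Rightarrow$(c).} This is immediate, because strong semigroups form a subfamily.

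\emph{(c)$\Rightarrow$(a).} I will argue by contraposition: assuming $\langle E\rangle$ is not a $\CaC$-semigroup, I will show that $\mathcal{ST}(E)$ is infinite. By \cite[Theorem 9]{check_C_semigr}, one of two things fails for $\langle E\rangle$.

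\begin{itemize}
\item[(i)] Some extremal ray $\tau$ has $(\tau\cap\CaC)\setminus\langle E\rangle$ infinite. Since $E$ is incomparable, $E\cap\tau$ has at most one element. An infinite complement then means $E\cap\tau=\emptyset$, which is impossible because $\CaC(E)$ has $\tau$ as an extremal ray. So the only possibility is $E\cap\tau=\{mn_\tau\}$, and then $\tau\cap\langle E\rangle=\langle mn_\tau\rangle$. If $m>1$, then $\tau\cap R(E)=\{0\}\cup\{kn_\tau: k\ge m\}\neq\tau\cap\N^p$. Proposition~\ref{EconAlgunRayoNoCompleto} then gives a semigroup in $\mathcal{ST}(E)$ of genus $\g(R(E))-(m-1)+\tilde g$ for every $\tilde g\ge m-1$, which is infinitely many. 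If $m=1$, the ray is fully covered and this condition does not fail.
\item[(ii)] Every ray is complete in $\langle E\rangle$, but some special line $\upsilon_{\tau'}(\alpha)$, with $\alpha\in\Gamma_{\tau'}$, contains no element of $E$. Since $R(E)$ is a $\CaC$-semigroup, the same theorem applied to $R(E)$ gives a minimal generator of $R(E)$ on that special line, and it must lie in $A$. Proposition~\ref{AconLineaMinimal} then applies. Its hypothesis $\tau\cap\N^p=\tau\cap R(E)$ for all rays holds because $\tau\cap\N^p=\tau\cap\langle E\rangle\subseteq\tau\cap R(E)$ in this case. It yields elements of $\mathcal{ST}(E)$ of every genus $\g(R(E))+\tilde g$, again infinitely many.
\end{itemize}

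If neither failure occurs, $\langle E\rangle$ is a $\CaC$-semigroup by \cite[Theorem 9]{check_C_semigr}, contradicting the assumption.

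The main obstacle is the case split in (c)$\Rightarrow$(a): making sure the two failure modes of \cite[Theorem 9]{check_C_semigr} for $\langle E\rangle$ translate exactly into the hypotheses of Propositions~\ref{EconAlgunRayoNoCompleto} and~\ref{AconLineaMinimal}. Concretely, I must check two things. First, in case (i), a ray of $\langle E\rangle$ that is incomplete forces $\tau\cap R(E)\neq\tau\cap\N^p$, which uses $m>1$. Second, in case (ii), a special line missing $E$ must carry an element of $A$, which uses that $R(E)$ is a $\CaC$-semigroup together with \cite[Theorem 9]{check_C_semigr} applied to $R(E)$.
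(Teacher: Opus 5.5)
Your proof is correct, and (a)$\Rightarrow$(b)$\Rightarrow$(c) is exactly the paper's argument. For (c)$\Rightarrow$(a), however, you take a different route.

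\textbf{The paper's route.} It makes no case distinction. Assuming $\langle E\rangle$ is not a $\CaC$-semigroup, it takes $S_m=\langle E\rangle\cup\{x\in R(E)\mid |x|\ge m\}$ for $m\ge M=\max\{|e|\mid e\in E\}$. It then checks strongness by hand. If $x\in\langle E\rangle$, subtract some $e$ with $x-e\in\langle E\rangle$. Otherwise, write $x=e_1+x'$ and note that $|x+y-e_1|\ge |x|\ge m$. There are infinitely many such semigroups because $R(E)\setminus\langle E\rangle$ is infinite.

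\textbf{Your route.} You use the dichotomy of \cite[Theorem 9]{check_C_semigr} to show that the failure of (a) falls under one of two earlier results:
\begin{itemize}
\item Proposition~\ref{EconAlgunRayoNoCompleto}, when some ray is incomplete, i.e.\ $m>1$;
\item Proposition~\ref{AconLineaMinimal}, when some special line avoids $E$ and therefore carries an element of $A$.
\end{itemize}

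\textbf{Comparison.} The paper's argument is short and self-contained: no special lines and no external characterization. Yours yields more. It shows that those two propositions exhaust the infinite case, and that $\mathcal{ST}(E)$ then contains semigroups of every genus $\ge \g(R(E))$. The cost is that your proof rests on both propositions.

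\textbf{A caveat on Proposition~\ref{AconLineaMinimal}.} Its printed proof does not handle the subcase $j=i$: there $e=e'$ and no contradiction arises. Also, the semigroup $T$ there has genus $\g(R(E))+\tilde g+1$, not $\g(R(E))+\tilde g$. The statement is nevertheless true. Choose $e\in E$ with $x-e\in\CaC$. If $x+y-e=a+in\notin T$, then minimality of $\Upsilon_{\tau'}(a)$ in $T_{\tau'}$ leaves two possibilities. Either $y\in\tau'$, in which case $n\in E$ and $x+y-n\in T$. Or $x-e\in\tau'$ and $y=a+\lambda n$ with $\lambda>\tilde g$, which contradicts $i\le\tilde g$. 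Neither issue affects the infiniteness you need.

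\textbf{A minor wording point.} Your sentence saying an infinite complement on $\tau$ ``means $E\cap\tau=\emptyset$'' is garbled. What you actually use, correctly, is that $E\cap\tau=\{mn_\tau\}$ and that the complement is infinite if and only if $m>1$.
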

\begin{proof}

We begin by (a) $\Rightarrow$ (b). Suppose $\langle E \rangle $ is a $\CaC$-semigroup. So, any affine semigroup $S$ with $\Mult(S)=E$ is also a $\CaC$-semigroup since $\CaC\setminus S\subseteq \CaC\setminus \langle E \rangle$, and we get the result. The implication (b) $\Rightarrow$ (c) is straightforward. To prove (c) $\Rightarrow$ (a), we assume that $\langle E \rangle$ is not a $\CaC$-semigroup and we are going to construct an infinite family of strong affine semigroups with set of multiplicities $E$.
We denote $|x| := x_1+\cdots+x_p$ for $x = (x_1,\ldots,x_p) \in \N^p,$ and consider the family of affine semigroups \[S_m = \langle E \rangle \cup \{x \in R(E) \, \vert \, |x| \geq m\},\] with $m \in \N$. Note that $\Mult(S_m)=E$, for every $m\in \N$.
We denote $M := {\rm max}\{ |e| \, \vert \, e \in E\},$ and let us see that $S_m$ is strong for all $m \geq M$. Take $m \geq M$ and two non-zero elements $x,y \in S_m$. If $x \in \langle E \rangle$, then there exists an $e \in E$ such that $x - e \in \langle E \rangle \subseteq S_m$ and, then, $x+y-e \in  S_m$. If $y \in \langle E \rangle$, we proceed analogously.
If $x, y \in R(E)$ with $|x|,|y|\geq m$, we write $x = e_1 + x'$ and $y = e_2 + y'$ with $e_1, e_2 \in E$ and $x',y' \in \CaC(E)$. Then, $x+y-e_1 = e_2 + x' + y' \in R(E)$ and $|x+y-e_1| = |x| + |y| - |e_1| \geq |x| \geq m$. Thus, $x+y-e_1 \in S_m$.  We conclude that the set of strong affine semigroups with set of multiplicities $E$ is infinite.
\end{proof}

\section{Special strong semigroups}\label{sec:sss}
As in previous sections, consider an incomparable set $E\subset \N^p\setminus\{0\}$. By relaxing the definition of being a strong semigroup, we introduce the concept of a special strong semigroup.

\begin{definition}\label{definitionspecialStrong}
An affine semigroup $S\subseteq\N^p$ with set of multiplicities $E$ is a special strong semigroup if for all $x,y\in S\setminus\{0\}$, there exists $e\in E$ such that $x+y-e\in S$.
\end{definition}

An illustrative example of a special strong semigroup is the set $R(E)$ given in the previous sections. Definition \ref{definitionspecialStrong} is motivated by the notion of being an affine MED-semigroup. First, we recall some alternative definitions of affine MED-semigroups (see \cite{G-T-V25_MED}).

\begin{proposition}\label{prop:MED-NS}
Let $S\subset \N^p$ be an affine semigroup with $E=\Mult (S)$, and let $\Omega$ be the set of extremal rays of $\CaC(S)$. Then, the following statements are equivalent:
\begin{enumerate}
    \item $S$ is a MED-semigroup.
    \item $\Ap(S,E\cap \Omega)= \big(\msg(S)\setminus (E\cap \Omega)\big)\sqcup \{0\}$.
    \item For all $x,y\in S\setminus\{0\},\, x+y-m\in S$ for some $m\in E\cap \Omega$.
\end{enumerate}
\end{proposition}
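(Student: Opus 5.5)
The plan is to go around the cycle $1\Leftrightarrow 2$, $2\Rightarrow 3$, $3\Rightarrow 2$, using the basic inclusion
\[
\big(\msg(S)\setminus (E\cap \Omega)\big)\sqcup\{0\}\subseteq \Ap(S,E\cap\Omega)
\]
as the backbone. This inclusion holds for every affine semigroup. Indeed, $0-m\notin S$ for every $m\neq 0$. Moreover, if $s\in\msg(S)\setminus(E\cap\Omega)$ and $m\in E\cap\Omega$ satisfied $s-m\in S$, then $s=m+(s-m)$ with $m\neq s$ would contradict the minimality of $s$ with respect to $\le_S$.

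I will also check that $\Ap(S,E\cap\Omega)$ is finite, since this is needed to make the cardinality version of the MED condition meaningful. For each extremal ray $\tau$, the element of $S\cap\tau$ that is minimal (in the $\le_{\CaC(S)}$ sense) lies in $\Mult(S)$: anything strictly below it in the cone order would have to lie on the face $\tau$ as well. Hence $E\cap\Omega$ contains a point on every extremal ray, so $\CaC(E\cap\Omega)=\CaC(S)$. Finiteness then follows from \cite[Theorem 2.6]{GGM}, recalled in Section~\ref{sec:Preli}.

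For $1\Leftrightarrow 2$, I will take the definition of MED-semigroup from \cite{G-T-V25_MED}. I expect it to say that the embedding dimension attains the upper bound forced by the inclusion above, namely $|\msg(S)\setminus(E\cap\Omega)|=|\Ap(S,E\cap\Omega)|-1$. Given this, the equivalence is immediate: a finite set contains the subset above, and the two are equal exactly when their cardinalities agree. If the cited definition is instead phrased differently (for instance through a Hilbert-type bound), the main obstacle is to translate it into this cardinality statement; I would do this by counting the minimal generators lying in $\Ap(S,E\cap\Omega)$.

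For $2\Rightarrow 3$, take $x,y\in S\setminus\{0\}$. The element $x+y$ is nonzero and is not a minimal generator, since it decomposes as a sum of two nonzero elements of $S$. By (2), $x+y\notin\Ap(S,E\cap\Omega)$, so $x+y-m\in S$ for some $m\in E\cap\Omega$.

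For $3\Rightarrow 2$, let $w\in\Ap(S,E\cap\Omega)\setminus\{0\}$ and suppose $w\notin\msg(S)$. Then $w=x+y$ with $x,y\in S\setminus\{0\}$, and (3) gives $m\in E\cap\Omega$ with $w-m\in S$, which contradicts $w\in\Ap(S,E\cap\Omega)$. So $w\in\msg(S)$. Since $m-m=0\in S$, no element of $E\cap\Omega$ lies in the Apéry set, so in fact $w\in\msg(S)\setminus(E\cap\Omega)$. Combined with the reverse inclusion from the first paragraph, this gives (2).
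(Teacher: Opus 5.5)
The paper does not prove this proposition. It recalls it from \cite{G-T-V25_MED}, so your proposal replaces a citation with a self-contained argument, and that argument is correct.

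Your proof of (2)$\Leftrightarrow$(3) uses only two facts: the inclusion $(\msg(S)\setminus(E\cap\Omega))\sqcup\{0\}\subseteq\Ap(S,E\cap\Omega)$, and the observation that $x+y$ with $x,y\in S\setminus\{0\}$ is never a minimal generator. This is the same mechanism the paper uses in Lemma~\ref{lemaAperys} and in the Apéry characterization of special strong semigroups in Section~\ref{sec:sss}. This part needs neither finiteness nor the definition of MED.

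Your finiteness check is also right. Since $\tau$ is a face, any element of $S\setminus\{0\}$ that is $\le_{\CaC(S)}$-below the least nonzero point of $S\cap\tau$ must lie on $\tau$, so that point belongs to $\Mult(S)$. Hence $\CaC(E\cap\Omega)=\CaC(S)$, and \cite[Theorem 2.6]{GGM} applies.

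The only step that cannot be checked from this paper is (1)$\Leftrightarrow$(2), because it depends entirely on the definition of MED in \cite{G-T-V25_MED}, which is not reproduced here. Suppose, as you expect, that MED means equality in the bound $\sharp\msg(S)\le\sharp\Ap(S,E\cap\Omega)+\sharp(E\cap\Omega)-1$. This bound is exactly what your inclusion yields once you note $E\cap\Omega\subseteq\msg(S)$. Under that definition, your counting argument finishes the proof.

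The citation buys the paper brevity. Your route shows that (2)$\Leftrightarrow$(3) is purely combinatorial and that (1) enters only through a cardinality identity.
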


Any special strong semigroup $S$ can be characterized using its minimal generating set, or the intersection $\Ap (S,E)$, analogously to the previous result.

\begin{proposition}\label{caracterizacion_EMED}
Let $S$ be an affine semigroup. Then $S$ is a special strong semigroup if and only if for any $a_1,a_2\in A$, there exists $e\in E$ such that $a_1+a_2-e\in S$.
\end{proposition}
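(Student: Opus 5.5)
The plan mirrors the proof of Proposition~\ref{CheckStrongByGenerators}, which becomes simpler here because $x=y$ is now allowed. Write $\msg(S)=E\sqcup A$ with $E=\Mult(S)$.

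For the direct implication, take $a_1,a_2\in A$, which may be equal. Both lie in $S\setminus\{0\}$, so Definition~\ref{definitionspecialStrong} applied with $x=a_1$, $y=a_2$ immediately gives some $e\in E$ with $a_1+a_2-e\in S$.

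For the converse, assume the condition on pairs of elements of $A$, and take arbitrary $x,y\in S\setminus\{0\}$. Expand them in the minimal generators:
\[
x=\sum_{e\in E}\alpha_e e+\sum_{a\in A}\beta_a a,\qquad y=\sum_{e\in E}\alpha'_e e+\sum_{a\in A}\beta'_a a,
\]
with all coefficients in $\N$. The argument splits into two cases.

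\textbf{Case 1: some $\alpha_{e_0}$ or $\alpha'_{e_0}$ is positive.} Say $\alpha_{e_0}>0$. Then $x-e_0\in S$, so $x+y-e_0\in S$, and we are done.

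\textbf{Case 2: all $\alpha_e$ and $\alpha'_e$ vanish.} Since $x,y\neq 0$, there are $a_1,a_2\in A$ (possibly equal) with $\beta_{a_1}\ge 1$ and $\beta'_{a_2}\ge 1$. Then $x-a_1\in S$ and $y-a_2\in S$. By hypothesis there is $e\in E$ with $a_1+a_2-e\in S$, and therefore
\[
x+y-e=(x-a_1)+(y-a_2)+(a_1+a_2-e)\in S.
\]
This is exactly the computation in the second bullet of the proof of Proposition~\ref{CheckStrongByGenerators}. The only change is that $a_1=a_2$ is now permitted, so the separate condition on $3a_1-e$ from that proposition is no longer needed.

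No step is a real obstacle. The only point needing care is reading the hypothesis as quantifying over all pairs, including $a_1=a_2$; this is what lets Case 2 absorb the situation $x=y$. If $A=\emptyset$, Case 2 never occurs and the statement holds trivially.
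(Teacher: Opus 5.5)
Your proof is correct and matches the paper's approach; it simply writes out what the paper compresses into ``the proof follows directly from the definition.'' The forward direction is the definition applied to $x=a_1$, $y=a_2$. The converse is the decomposition argument of Proposition~\ref{CheckStrongByGenerators}, where allowing $a_1=a_2$ removes the need for the separate $3a_1-e$ condition.
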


\begin{proof}
The proof follows directly from the definition of special strong semigroup.
\end{proof}

\begin{proposition}
Let $S\subseteq \N^p$ be an affine semigroup with $E=\Mult (S)$, and $A=\msg(S)\setminus E$. Then, $S$ is a special strong semigroup if and only if $\Ap (S,E)= A \sqcup \{0\}$.
\end{proposition}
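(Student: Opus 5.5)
The plan is to mirror the proof of Lemma~\ref{lemaAperys}, dropping the case distinctions that came from requiring $x\neq y$. Throughout, write $\msg(S)=E\sqcup A$. First I record the containment $A\sqcup\{0\}\subseteq \Ap(S,E)$, which holds for any affine semigroup. Clearly $0\in\Ap(S,E)$. Now let $a\in A$ and suppose $a-e\in S$ for some $e\in E$. Then $a=e+(a-e)$ with $e\neq 0$. Minimality of $a$ with respect to $\le_S$ forces $a-e=0$, so $a=e\in E$, contradicting $E\cap A=\emptyset$.

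For the direct implication, assume $S$ is special strong and take $x\in\Ap(S,E)\setminus\{0\}$. If $x\notin\msg(S)$, then $x=s+s'$ with $s,s'\in S\setminus\{0\}$. By Definition~\ref{definitionspecialStrong}, which now applies even when $s=s'$, there is an $e\in E$ with $x-e=s+s'-e\in S$. This contradicts $x\in\Ap(S,E)$. Hence $x\in\msg(S)$. Since $e-e=0\in S$, no element of $E$ lies in $\Ap(S,E)$, so $x\in A$. Together with the containment above, this gives $\Ap(S,E)=A\sqcup\{0\}$.

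For the converse, assume $\Ap(S,E)=A\sqcup\{0\}$. By Proposition~\ref{caracterizacion_EMED}, it suffices to show that for all $a_1,a_2\in A$ (possibly equal) some $e\in E$ satisfies $a_1+a_2-e\in S$. The element $a_1+a_2$ is a sum of two nonzero elements of $S$, so it is not a minimal generator and in particular lies outside $A$. It is also nonzero, since $S\subseteq\N^p$. Therefore $a_1+a_2\notin\Ap(S,E)$, which means $a_1+a_2-e\in S$ for some $e\in E$.

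If I wanted to avoid relying on Proposition~\ref{caracterizacion_EMED}, the only extra step would be the reduction from arbitrary $x,y\in S\setminus\{0\}$ to generators. I would write $x$ and $y$ in terms of $E\sqcup A$. If some $e\in E$ appears in either expression, then $x+y-e\in S$ immediately. Otherwise $x-a_1\in S$ and $y-a_2\in S$ for some $a_1,a_2\in A$, and then $x+y-e=(x-a_1)+(y-a_2)+(a_1+a_2-e)\in S$. I do not expect a real obstacle here; the only point needing care is verifying $A\cap E=\emptyset$ and that $A\subseteq\Ap(S,E)$, both of which follow from minimality of generators as shown above.
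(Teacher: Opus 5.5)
Your proof is correct and follows essentially the paper's argument. In both directions the key point is the same: a nonzero element of $\Ap(S,E)$ outside $A$ would be a sum of two nonzero elements of $S$, which special strongness forbids, and a sum $x+y$ of nonzero elements can never lie in $A\sqcup\{0\}$. The differences are cosmetic. The paper gets the forward direction by citing Lemma~\ref{lemaAperys} and then showing $B=\emptyset$, whereas you argue directly. In the converse the paper works with arbitrary $x,y\in S\setminus\{0\}$, so your detour through Proposition~\ref{caracterizacion_EMED} is unnecessary; your argument for $a_1+a_2$ applies verbatim to $x+y$.
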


\begin{proof}
Assume $S$ is a special strong semigroup. So, it is also a strong semigroup, and by Lemma \ref{lemaAperys}, $\Ap (S,E)= A \sqcup B\sqcup \{0\}$, for some $B\subseteq 2A$. Suppose that there exists $b\in B$, then $b=2a$ with $a\in A$. Since $S$ is a special strong semigroup, $b-e=a+a-e\in S$ for some $e\in E$, and thus $B=\emptyset$. Then, $\Ap (S,E)= A \sqcup \{0\}$.

Now, consider that $\Ap (S,E)= A \sqcup \{0\}$, and suppose $S$ is not a special strong semigroup. Thus, there exist $x,y\in S\setminus\{0\}$ such that $x+y-e\notin S$ for any $e\in E$. Thus, $x+y\in \Ap (S,E)$. It is not possible since $x+y\in A\subset \msg(S)$.
\end{proof}

The proofs of the following results are analogous to the proofs of Lemma 21, Proposition 22 and Theorem 26 in \cite{G-T-V25_MED}, respectively.

\begin{lemma}
Let $S\subseteq \N^p$ be an affine semigroup with $E\subset S$ an incomparable set such that $\CaC(S)=\CaC(E)$. Then, the affine semigroup $T=(E+S)\cup \{0\}$ is a special strong semigroup. Moreover, $$\CaH(T)=\CaH(S)\sqcup \left(\Ap (S,E)\setminus\{0\}\right).$$
\end{lemma}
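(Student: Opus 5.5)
We have to show four things about $T=(E+S)\cup\{0\}$: it is an affine semigroup, $\Mult(T)=E$, every pair of its generators $a_1,a_2$ satisfies $a_1+a_2-e\in T$ for some $e\in E$ (so Proposition \ref{caracterizacion_EMED} applies), and the stated formula for the gaps holds.

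\emph{Semigroup structure.} $T$ is closed under addition, since $(e+s)+(e'+s')=e+(e'+s+s')$ with $e'+s+s'\in S$. It is finitely generated because $E+S$ is generated as a monoid by $E\cup(E+\Ap(S,E))$. Indeed, write $s\in S$ as $s=w+\sum_{e'}\lambda_{e'}e'$ with $w\in\Ap(S,E)$. Then $e+s=(e+w)+\sum_{e'}\lambda_{e'}e'$, and $\Ap(S,E)$ is finite because $\CaC(E)=\CaC(S)$. This also gives $\CaC(T)=\CaC(E)=\CaC(S)$, since $E\subseteq T\subseteq S$.

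\emph{Multiplicities.} Every nonzero element of $T$ has the form $e+s$ with $s\in S\subseteq\CaC(E)$, so it is $\ge_{\CaC}$ some $e$. Conversely, each $e\in E$ is $\le_{\CaC}$-minimal in $T\setminus\{0\}$. If $e-(e'+s)\in\CaC$, then $e-e'\in\CaC$. Incomparability of $E$ forces $e=e'$, and then $s\in\CaC$ with $-s\in\CaC$, so $s=0$. Hence $\Mult(T)=E$.

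\emph{Special strong property.} This is now immediate from the definition. For $x=e_1+s_1$ and $y=e_2+s_2$ in $T\setminus\{0\}$, we get $x+y-e_1=e_2+(s_1+s_2)\in E+S\subseteq T$. No restriction $x\ne y$ is needed.

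\emph{Gaps.} We have $\CaH(T)=\CaC\setminus T=(\CaC\setminus S)\sqcup(S\setminus T)$, and $S\setminus T=S\setminus(\{0\}\cup(E+S))$. An element $s\in S\setminus\{0\}$ lies outside $E+S$ exactly when $s-e\notin S$ for every $e\in E$, that is, when $s\in\Ap(S,E)\setminus\{0\}$. This gives the disjoint union $\CaH(T)=\CaH(S)\sqcup(\Ap(S,E)\setminus\{0\})$.

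The only step needing care is the equality $\Mult(T)=E$, which rests on the incomparability of $E$ and the pointedness of $\CaC$. Everything else is direct verification.
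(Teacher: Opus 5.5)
Your proof is correct and is essentially the direct verification the paper intends; the paper gives no proof of its own and only refers to the analogous lemma in its MED reference. Writing each nonzero element of $T$ as $e+s$ and subtracting that $e$ gives the special strong property, and the equality $S\setminus T=\Ap(S,E)\setminus\{0\}$ gives the gap formula. Your explicit checks that $\Mult(T)=E$ (using incomparability of $E$ and pointedness of the cone) and that $T$ is generated by $E\cup(E+\Ap(S,E))$ fill in details the paper leaves implicit.
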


\begin{corollary}
Under the assumptions of the previous lemma, it holds that:
    \begin{itemize}
        \item $S$ is a $\CaC$-semigroup if and only if $T$ is a $\CaC$-semigroup.
        \item $T=\left(S\setminus \Ap (S,E) \right)\cup \{0\}$.
    \end{itemize}
\end{corollary}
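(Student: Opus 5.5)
The corollary has two parts, and I will prove the second one first because the first follows from it together with the genus formula of the previous lemma. Throughout, $S$ is an affine semigroup, $E\subset S$ is incomparable with $\CaC(S)=\CaC(E)$, and $T=(E+S)\cup\{0\}$.

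\emph{Second item.} I will show $T=(S\setminus \Ap(S,E))\cup\{0\}$ by double inclusion.
\begin{itemize}
\item[$(\subseteq)$] A nonzero element of $T$ has the form $e+s$ with $e\in E$ and $s\in S$. Then $(e+s)-e=s\in S$, so $e+s\notin\Ap(S,e)$. Hence $e+s\notin \Ap(S,E)=\bigcap_{m\in E}\Ap(S,m)$, and clearly $e+s\in S$.
\item[$(\supseteq)$] If $x\in S\setminus\Ap(S,E)$, then $x\notin \Ap(S,e)$ for some $e\in E$. By definition $x-e\in S$, so $x=e+(x-e)\in E+S$.
\end{itemize}
Note that $0\in\Ap(S,E)$, since $-e\notin S\subseteq\N^p$ for every nonzero $e$. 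So the union with $\{0\}$ is genuinely needed and the two sets agree.

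\emph{First item.} By the previous lemma, $\CaH(T)=\CaH(S)\sqcup(\Ap(S,E)\setminus\{0\})$, where gaps are taken in $\CaC(T)$ and $\CaC(S)$ respectively. For this comparison to make sense I must check that $\CaC(T)=\CaC(S)$.
\begin{itemize}
\item $T\subseteq S$ gives $\CaC(T)\subseteq\CaC(S)$.
\item $E\subseteq T$ gives $\CaC(E)\subseteq\CaC(T)$, and $\CaC(E)=\CaC(S)$ by hypothesis.
\end{itemize}
Since $\CaC(E)$ is finitely generated by $E$, the rational cones coincide, so both integer cones are the same. Hence $S$ and $T$ have the same ambient cone.

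Now $\Ap(S,E)$ is finite because $\CaC(E)=\CaC(S)$ (\cite[Theorem 2.6]{GGM}, recalled in the preliminaries). Therefore $\CaH(T)$ is finite if and only if $\CaH(S)$ is finite, which is exactly the statement that $T$ is a $\CaC$-semigroup if and only if $S$ is. As a bonus, $\g(T)=\g(S)+|\Ap(S,E)|-1$ when these are finite.

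The only point needing care is this identification of the cones, so that the gap sets are measured in the same ambient set and the finiteness criterion for Apéry sets applies. Everything else is a direct unpacking of definitions.
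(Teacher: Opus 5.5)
Your proof is correct and follows essentially the route the paper intends; the paper gives no proof here and defers to the analogous argument in its MED reference. The second item is a direct unpacking of the definition of $\Ap(S,E)$, and the first follows from the decomposition $\CaH(T)=\CaH(S)\sqcup(\Ap(S,E)\setminus\{0\})$ of the previous lemma together with the finiteness of $\Ap(S,E)$, which holds because $\CaC(E)=\CaC(S)$. Your explicit check that $\CaC(T)=\CaC(S)$ is a detail the paper leaves implicit.
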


\begin{theorem}
Let $S$ be a semigroup minimally generated by $E\sqcup A$. Then, $S$ is a special strong semigroup if and only if the semigroup $(E+S)\cup \{0\}$ is equal to $S\setminus A$.
\end{theorem}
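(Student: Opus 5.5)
Both sides of the equivalence will be reduced to the Apéry set characterization proved just before: $S$ is special strong if and only if $\Ap(S,E)=A\sqcup\{0\}$. The first step is the identity
\[
(E+S)\cup\{0\} \;=\; \big(S\setminus \Ap(S,E)\big)\cup\{0\},
\]
valid for any affine semigroup $S$ with $E=\Mult(S)$. For $\subseteq$, an element $e+s$ with $e\in E$, $s\in S$ lies in $S$ and satisfies $(e+s)-e=s\in S$, so it is not in $\Ap(S,E)$. For $\supseteq$, if $x\in S\setminus\Ap(S,E)$ then $x-e\in S$ for some $e\in E$, so $x\in E+S$. This is the second item of the preceding corollary, but the direct argument needs no hypothesis on $\CaC(S)$, so I will just prove it inline.

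Next I will compare $S\setminus A$ with $(S\setminus\Ap(S,E))\cup\{0\}$. We always have $A\sqcup\{0\}\subseteq \Ap(S,E)$. Indeed, $0-e\notin S$ since $e\neq 0$. Also, if $a\in A$ and $a-e\in S$ for some $e\in E$, then $a=e+(a-e)$ with both summands in $S$. Since $a\ne e$, we have $a-e\neq 0$, and this would contradict the minimality of $a$ as a generator.

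Hence $S\setminus\Ap(S,E)\subseteq S\setminus A$. Moreover, $0\in S\setminus A$, so $(S\setminus\Ap(S,E))\cup\{0\}\subseteq S\setminus A$ always holds. Equality holds exactly when every element of $\Ap(S,E)$ different from $0$ lies in $A$, that is, when $\Ap(S,E)\subseteq A\sqcup\{0\}$. Combined with the reverse inclusion, this is the condition $\Ap(S,E)=A\sqcup\{0\}$.

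Therefore $(E+S)\cup\{0\}=S\setminus A$ if and only if $\Ap(S,E)=A\sqcup\{0\}$, which by the previous proposition holds if and only if $S$ is special strong. There is no serious obstacle. The only point needing care is the bookkeeping of $0$: it belongs to $\Ap(S,E)$ and to $S\setminus A$ but is excluded from $E+S$. One should also note that the argument uses $\Mult(S)=E$ only through $E\subseteq\msg(S)$, with $E$ and $A$ disjoint.
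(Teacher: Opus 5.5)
Your proof is correct and is essentially the intended argument. The paper only says the proof is analogous to the MED-semigroup case, and the two ingredients you combine (the preceding corollary's identity $(E+S)\cup\{0\}=(S\setminus\Ap(S,E))\cup\{0\}$, which you rightly prove inline without the hypothesis on $\CaC(S)$, and the characterization $\Ap(S,E)=A\sqcup\{0\}$ of special strong semigroups) are exactly what the surrounding results are set up for, with the inclusion $A\sqcup\{0\}\subseteq\Ap(S,E)$ and the bookkeeping of $0$ handled correctly.
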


Since all special strong semigroups are strong semigroups, Lemmas \ref{strongUnionFrob}, \ref{SistMin_S_menos_a}, and \ref{SmenosXstrong} can be adapted to such subfamily, which allow us to define a tree of special strong $\CaC$-semigroups with root $R(E)$, and to introduce an algorithm to compute the special strong $\CaC$-semigroup up to a fixed genus, by replacing the condition ``$a$ satisfies Lemma \ref{SmenosXstrong}" (line \ref{lineaCondiciones}) by ``$a$ satisfies Lemma \ref{SmenosXMED}".

\begin{lemma}\label{MEDUnionFrob}
Let $\preceq$ be a monomial order on $\N^p$, and let $S\neq R(E)$ be a special strong $\CaC$-semigroup, then $S \cup \{\Fb_E(S)\}$ is a special strong $\CaC$-semigroup.
\end{lemma}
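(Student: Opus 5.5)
The argument mirrors the proof of Lemma~\ref{strongUnionFrob}. The only change is that pairs $x=y$ must now be handled as well, since Definition~\ref{definitionspecialStrong} drops the hypothesis $x\neq y$.

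Write $f=\Fb_E(S)$ and $S'=S\cup\{f\}$. First, $S'$ is a $\CaC$-semigroup with $\Mult(S')=E$. It is closed under addition: $f\in R(E)\setminus\{0\}$, so for $s\in S\setminus\{0\}$ we have $f+s\in R(E)$ and $f\prec f+s$, and maximality of $f$ in $R(E)\setminus S$ gives $f+s\in S$. Likewise $2f\in R(E)$ with $f\prec 2f$, so $2f\in S$. Its complement in $\CaC$ is the complement of $S$ minus one point, hence finite. Since $S\subseteq S'\subseteq R(E)$, its set of multiplicities is still $E$, because every element of $R(E)\setminus\{0\}$ dominates some $e\in E$ in $\le_{\CaC}$. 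If $S'=R(E)$ we are done, since $R(E)$ is special strong. So assume $S'\neq R(E)$.

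Now take $x,y\in S'\setminus\{0\}$, with $x=y$ allowed, and split into cases.

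\textbf{Case $x,y\in S$.} Since $S$ is special strong, there is $e\in E$ with $x+y-e\in S\subseteq S'$.

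\textbf{Case $x=f$ (or symmetrically $y=f$).} Since $y\in R(E)\setminus\{0\}$, we can write $y=e+c$ with $e\in E$ and $c\in\CaC$. Then $f+y-e=f+c$. Because $f=e'+c'$ with $e'\in E$ and $c'\in\CaC$, this element equals $e'+(c'+c)$ and so lies in $R(E)$. This covers $y=f$ as well.

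\textbf{Conclusion in the second case.} If $c=0$, then $f+y-e=f\in S'$. Otherwise $f\prec f+c$, because a monomial order is compatible with addition and $0\prec c$. Then $f+c\in R(E)$ is strictly above the maximum of $R(E)\setminus S$, so $f+c\in S\subseteq S'$.

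The only delicate point is this last step: it requires the subcase $c=0$ and relies on $0$ being the minimum of the monomial order. Everything else is routine and follows Lemma~\ref{strongUnionFrob} verbatim. Therefore $S'$ is a special strong $\CaC$-semigroup.
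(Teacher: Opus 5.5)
Your proof is correct and takes the route the paper intends: the paper's proof of this lemma just says to adapt the argument of Lemma~\ref{strongUnionFrob}, which is exactly what you do. Your separate treatment of the case $x=y=\Fb_E(S)$ is the care that adaptation needs, and so is the subcase $y=e$, where the strict inequality $\Fb_E(S)\prec\Fb_E(S)+y-e$ asserted in the paper's template becomes an equality.
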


\begin{proof}
Following similar arguments as in the proof of Lemma \ref{strongUnionFrob}.
\end{proof}

\begin{lemma}\label{MEDSistMin_S_menos_a}
Let $S=\langle E\sqcup A\rangle\subseteq \N^p$ be an affine special strong semigroup, and $a\in A$. Then, the set $\msg (S\setminus\{a\})\setminus E$ is a subset of $(A\setminus\{a\}) \cup (\{a\}+E)$.
\end{lemma}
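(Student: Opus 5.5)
Since every special strong semigroup is in particular strong, the plan is to start from Lemma \ref{SistMin_S_menos_a}. It gives
\[
\msg(S\setminus\{a\})\setminus E\subseteq (A\setminus\{a\})\cup\{2a\}\cup(\{a\}+E).
\]
So it only remains to show that $2a$ is not a minimal generator of $S\setminus\{a\}$.

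To do this, apply the defining property of a special strong semigroup (Definition \ref{definitionspecialStrong}), equivalently Proposition \ref{caracterizacion_EMED}, with $a_1=a_2=a$. This gives an $e\in E$ with $2a-e\in S$, so $2a=e+(2a-e)$.

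Next, check that this is a genuine decomposition in $S\setminus\{a\}$ into two non-zero elements. First, $e\neq 0$ and $e\in S\setminus\{a\}$, since $a\in A$ and $E\cap A=\emptyset$. Second, $2a-e\neq 0$, because otherwise $2a=e$. Then $a\le_{\CaC(S)} e$ with $a\neq e$, which contradicts the $\le_{\CaC(S)}$-minimality of $e\in\Mult(S)$. Third, $2a-e\neq a$, because otherwise $a=e\in E$, again a contradiction. Hence $2a-e\in S\setminus\{0,a\}$, and $2a$ is a sum of two non-zero elements of $S\setminus\{a\}$, so it is not in $\msg(S\setminus\{a\})$.

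This leaves $\msg(S\setminus\{a\})\setminus E\subseteq (A\setminus\{a\})\cup(\{a\}+E)$, which is the statement. The only mildly delicate point is ruling out the degenerate cases $2a-e\in\{0,a\}$. Both follow directly from $E$ being the set of $\le_{\CaC(S)}$-minimal elements and $a\notin E$, so there is no real obstacle.
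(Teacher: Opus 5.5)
Your proposal is correct and follows the paper's proof: you reduce via Lemma \ref{SistMin_S_menos_a} to excluding $2a$, then apply the special strong property with $x=y=a$ to write $2a=e+(2a-e)$ with $2a-e\in S\setminus\{0,a\}$. Your explicit check that $2a-e\neq 0$ is a small detail the paper leaves implicit.
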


\begin{proof}
Since any  special strong semigroup is strong, by Lemma \ref{SistMin_S_menos_a}, the minimal system of generators of $S\setminus\{a\}$ is a subset of $E\cup (A\setminus\{a\})\cup \{2a\} \cup (\{a\}+E)$. Moreover, $2a-e\in S$ for some $e\in S$. Note that $2a-e\neq a$. Thus, $2a$ is not a minimal generator of $S\setminus\{a\}$.
\end{proof}

\begin{lemma}\label{SmenosXMED}
Let $S=\langle E\sqcup A\rangle$ be an affine special strong semigroup, and $a\in A$. The semigroup $S\setminus \{a\}$ is an affine special strong semigroup if and only if: if $a+e= a_1+a_2$ for some $e\in E$, and $ a_1,a_2\in A\setminus \{a\}$, then $a+e-e'\in S$ for some $e'\in E\setminus\{e\}$.
\end{lemma}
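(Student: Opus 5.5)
The plan is to copy the proof of Lemma \ref{SmenosXstrong}, replacing Proposition \ref{CheckStrongByGenerators} by Proposition \ref{caracterizacion_EMED}. Put $S'=S\setminus\{a\}$. Since $a\in A$, $S'$ is an affine semigroup with $\Mult(S')=E$. By Lemma \ref{MEDSistMin_S_menos_a}, $A':=\msg(S')\setminus E\subseteq (A\setminus\{a\})\cup(\{a\}+E)$. By Proposition \ref{caracterizacion_EMED}, $S'$ is special strong if and only if for every $u,v\in A'$, not necessarily distinct, some $e'\in E$ satisfies $u+v-e'\in S'$. So I check every type of pair in $A'$.

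First, pairs involving an element of $\{a\}+E$. Take $u=a+e_0$ and $v\in A'$. Then $u+v-e_0=a+v$. Since $v\neq0$ and $a\in\msg(S)$, we have $a+v\neq a$, so $a+v\in S'$. Thus these pairs, including $u=v$, always satisfy the requirement, because the witness $e_0$ exists automatically. The only pairs left are $a_1,a_2\in A\setminus\{a\}$, possibly equal. Since $S$ is special strong, there is $e\in E$ with $a_1+a_2-e\in S$, and this element already lies in $S'$ unless $a_1+a_2-e=a$, i.e.\ $a+e=a_1+a_2$.

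For the converse, assume the stated condition. If $a+e=a_1+a_2$, the hypothesis gives $e'\in E\setminus\{e\}$ with $a_1+a_2-e'=a+e-e'\in S$. This element is not $a$, since otherwise $e=e'$. Hence it lies in $S'$, every pair is covered, and $S'$ is special strong.

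For the direct implication, assume $S'$ is special strong and $a+e=a_1+a_2$ with $a_1,a_2\in A\setminus\{a\}$. The main obstacle is that $a_1,a_2$ need not belong to $\msg(S')$, so Proposition \ref{caracterizacion_EMED} cannot be applied to them directly. I would avoid this by applying the definition of special strong itself: $a_1,a_2\in S'\setminus\{0\}$, so there is $e'\in E$ with $a_1+a_2-e'\in S'\subseteq S$. If $e'=e$, then $a_1+a_2-e'=a\notin S'$, which is impossible. Therefore $e'\neq e$ and $a+e-e'\in S$, which is exactly the condition.
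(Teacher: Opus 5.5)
Your proof is correct and follows the paper's argument. The direct implication applies the definition of special strong to $a_1,a_2\in S\setminus\{a\}$; you make explicit why the witness must differ from $e$, which the paper leaves as ``trivially''. The converse checks all pairs in $A'\subseteq (A\setminus\{a\})\cup(\{a\}+E)$ via Proposition~\ref{caracterizacion_EMED}, and your single witness $e_0$ for pairs involving $a+e_0$ merges the paper's two remaining cases into one.

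Incidentally, the obstacle you mention does not arise: any $a_1\in A\setminus\{a\}$ is indecomposable in $S$, hence in $S\setminus\{a\}$, so $A\setminus\{a\}\subseteq \msg(S\setminus\{a\})$. Arguing directly from the definition works either way.
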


\begin{proof}
Trivially, if $S\setminus\{a\}$ is an affine special strong semigroup, and $a+e= a_1+a_2$ for some $e\in E$, and $ a_1,a_2\in A\setminus \{a\}$, then $a_1+a_2-e'\in S\setminus\{a\}$.
Conversely, by Lemma \ref{MEDSistMin_S_menos_a}, the set $A'=\msg (S\setminus\{a\})\setminus E$ is a subset of $(A\setminus\{a\})\cup (\{a\}+E)$, and applying Proposition \ref{caracterizacion_EMED} to prove that $S\setminus\{a\}$ is an affine special strong semigroup, we only need to consider two cases:
\begin{itemize}
\item Let $a_1,a_2\in A\setminus\{a\}$. Since $S$ is a special strong semigroup, $a_1+a_2-e\in S$ for some $e\in E$. If $a_1+a_2-e\neq a$, then $a_1+a_2-e\in S\setminus\{a\}$. Otherwise, by hypothesis $a+e-e'\in S\setminus\{a\}$ for some $e'\in E\setminus\{e\}$.

\item Consider $e,e'\in E$. Hence, $(a+e)+(a+e')-e=2a+e'$ belongs to $S\setminus \{a\}$.

\item Consider $e,e'\in E$ and $a_1\in A\setminus\{a\}$. So, $a_1+(a+e)-e \in S\setminus \{a\}$.
\end{itemize}
\end{proof}

\section{The indispensable monomials of the ideals of strong semigroups}\label{sec:indisp}

We begin this section by introducing several necessary concepts. Given an affine semigroup $S$, the semigroup/toric ideal of $S$ is a polynomial ideal that encodes the arithmetic relations among the generators of $S$ (see \cite{Herzog,MillerSturmfels}). To specialise our notation, we define these concepts from the fixed minimal generating set $E\sqcup A$ of an affine semigroup $S$ in the same way as the previous sections, that is, $E=\{e_1,\ldots ,e_t\}=\Mult (S)$ and $A=\{a_1,\ldots ,a_r\}=\msg(S)\setminus E$.

Let $\k$ be a field, and consider the $S$-graded polynomial ring $R=\k[x_1,\ldots,x_t,y_1,\ldots ,y_r]$ where the $S$-degree of a monomial $X^\alpha Y^\beta = x_1^{\alpha_1}\cdots x_t^{\alpha_t}y_1^{\beta_1}\cdots y_r^{\beta_r}$ is $S\text{-degree}(X^\alpha Y^\beta) = \sum _{i=1}^t\alpha_i e_i + \sum _{i=1}^r\beta_i a_i$. The semigroup ideal of $S$ is the $S$-homogeneous polynomial ideal $I_S\subset R$  defined as
\begin{equation}
I_S=\left\langle X^\alpha Y^\beta - X^\gamma Y^\delta \mid \sum _{i=1}^t\alpha_i e_i + \sum _{i=1}^r\beta_i a_i = \sum _{i=1}^t\gamma_i e_i + \sum _{i=1}^r\delta_i a_i\right\rangle.
\end{equation}
It is well-known that there exist some minimal generating sets for inclusion given by (pure) binomials (see \cite{Herzog}) where
all of them have the same cardinality. They are characterized using simplicial complexes (see \cite{ComplejosSimpliciales}, and the references therein) in the following way. Let $\nabla_m$ be the simplicial complex $\big\{ F \subseteq C_m \mid \gcd(F) \neq 1\big\},$ where $\gcd(F)$ denotes the greatest common divisor of the monomials in $F$, $m$ belongs to $S$, and $C_m=\big\{ X^\alpha Y^\beta  \mid S\mbox{-degree}(X^\alpha Y^\beta)=m\big\}$ (\cite{TesisShalom}). The vertex set of $\nabla_m$ is equivalent to the set of all the ways of writing $m$ as a combination of the minimal generators of $S$.

We have the characterization of the minimal binomial generating sets of $I_S$.
\begin{theorem}(\cite{TesisShalom})\label{TheoremShalom}
Let $\Lambda$ be a minimal binomial generating set of $I_S$,
and $M=\{S\text{-degree}(f)\mid f \in \Lambda\}$. Then, the simplicial complex $\nabla_{m}$ is non-connected if and only if $m\in M$.
\end{theorem}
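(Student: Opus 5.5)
The plan is to prove a sharper statement: for each $m\in S$, every minimal binomial generating set $\Lambda$ of $I_S$ contains exactly $c(m)-1$ elements of $S$-degree $m$, where $c(m)$ is the number of connected components of $\nabla_m$. The theorem follows at once, since $m\in M$ if and only if $c(m)\ge 2$.

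\emph{Setup.} Since $S\subseteq\N^p$ with $\msg(S)\subset \N^p\setminus\{0\}$, the grading is positive. Concretely:
\begin{itemize}
\item each $C_m$ is finite;
\item the relation $\le_S$ is a partial order that is well-founded on $S$;
\item $R_0=\k$, and every graded piece $R_m$ is finite dimensional.
\end{itemize}
For $m\in S$, let $I_{<m}$ be the ideal generated by the homogeneous elements of $I_S$ whose $S$-degree $m'$ satisfies $m'\le_S m$ and $m'\neq m$. This is the same ideal as the one generated by all homogeneous elements of $I_S$ of degree different from $m$, because only lower degrees can contribute in degree $m$. By a graded Nakayama argument, valid for positively graded rings, a set $\Lambda$ of homogeneous elements generates $I_S$ minimally if and only if, for each $m$, the degree-$m$ elements of $\Lambda$ project to a basis of the vector space $(I_S)_m/(I_{<m})_m$. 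So it suffices to show
\[
\dim_\k (I_S)_m/(I_{<m})_m = c(m)-1 .
\]

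\emph{Computing $(I_S)_m$.} The space $(I_S)_m$ consists of the $\k$-combinations of monomials in $C_m$ whose coefficients sum to zero. Indeed, $I_S$ is spanned by binomials, and conversely every such combination is a combination of the differences $u-v$ with $u,v\in C_m$. Hence $\dim (I_S)_m=|C_m|-1$.

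\emph{Computing $(I_{<m})_m$.} I claim that $(I_{<m})_m$ is spanned by the differences $u-v$ with $u,v$ in the same connected component of $\nabla_m$. Granting this, the dimension is $|C_m|-c(m)$, and subtracting gives $c(m)-1$. The claim has two inclusions.
\begin{itemize}
\item \emph{Inclusion $\supseteq$.} It is enough to treat an edge of $\nabla_m$, that is, $u,v\in C_m$ with $\gcd(u,v)=w\neq 1$. Write $u=wu'$ and $v=wv'$. Then $u'-v'\in I_S$ has degree $m-\deg w$, which is strictly below $m$. Hence $u-v=w(u'-v')\in I_{<m}$. Along a path in a component, the differences telescope.
\item \emph{Inclusion $\subseteq$.} An element of $(I_{<m})_m$ is a $\k$-combination of terms $w\,(u'-v')$, where $u'-v'\in I_S$ is a binomial of degree $m'<m$ and $w$ is a monomial of degree $m-m'\neq 0$. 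Therefore $w\neq 1$ divides both $wu'$ and $wv'$, so these two monomials are adjacent in $\nabla_m$. Each such term is thus a difference within a single component.
\end{itemize}

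\emph{Main obstacle.} The step needing the most care is the Nakayama reduction. One must justify it for an $S$-grading rather than a $\Z$-grading, with $I_{<m}$ taken with respect to $\le_S$. I would handle this by noting that, because the grading is positive, any homogeneous generating set can be reduced degree by degree, proceeding in an order compatible with $\le_S$. This gives both directions: a minimal set cannot contain redundant degree-$m$ elements modulo $I_{<m}$, and it must contain enough of them to span the quotient.
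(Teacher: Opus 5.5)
Your proposal is correct. The paper gives no proof of this statement and only cites \cite{TesisShalom}, and your argument is the standard one behind that result: you show that
\[
\dim_{\k}(I_S)_m/(I_{<m})_m=c(m)-1,
\]
where $(I_{<m})_m=(\mathfrak{m}I_S)_m$ for $\mathfrak{m}=(x_1,\ldots,x_t,y_1,\ldots,y_r)$, and where $(I_{<m})_m$ is spanned by the differences of monomials lying in the same connected component of $\nabla_m$. This yields exactly $c(m)-1$ elements of degree $m$ in every minimal binomial generating set.

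One small inaccuracy: $I_{<m}$ is not the same ideal as the one generated by all homogeneous elements of $I_S$ of degree $\neq m$, since the two generally differ in other degrees. Only their degree-$m$ components coincide, but that is all your argument uses.
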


Note that for any affine semigroup $S$, and every $m\in S$, the simplicial complex $\nabla_m$ only depends on the fixed generating set of $S$. Thus, the set of $S$-degrees of any two different minimal generating sets of the ideal $I_S$ is equal. These $S$-degrees are known as the Betti elements of $S$ (see \cite{G-O-10}). A monomial is called an indispensable monomial when it appears in any minimal generating set of $I_S$. Equivalently, a monomial $X^\alpha Y^\beta$ of $S\text{-degree } m$ is indispensable if and only if $\{X^\alpha Y^\beta\}$ is a connected component of the non-connected simplicial complex $\nabla_{m}$ (see, e.g., \cite{CKT}).

From Corollary \ref{escritura_unica}, the simplicial complex associated with an element belonging to the Apéry set is easily determined in this lemma.

\begin{lemma}
Let $S=\langle E\sqcup A\rangle$ be an affine (special) strong semigroup. Then, $\nabla _m$ has only a vertex for every $m\in \Ap (S,E)$.
\end{lemma}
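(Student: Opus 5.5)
The plan is to reduce the statement directly to Corollary \ref{escritura_unica}. By definition, the vertex set of $\nabla_m$ is $C_m$, the set of monomials $X^\alpha Y^\beta$ of $S$-degree $m$. These monomials are in bijection with the expressions $m=\sum_{i}\alpha_i e_i+\sum_j\beta_j a_j$ of $m$ as a combination of the fixed minimal generators $E\sqcup A$. So the number of vertices of $\nabla_m$ equals the number of factorizations of $m$ with respect to $\msg(S)$. It therefore suffices to show that every $m\in\Ap(S,E)$ has exactly one such factorization.

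Existence of at least one factorization is immediate, since $m\in S=\langle E\sqcup A\rangle$. For uniqueness, first consider the case where $S$ is strong. Here I invoke Corollary \ref{escritura_unica}, which says precisely that every element of $\Ap(S,E)$ has a unique expression in terms of the minimal generators. Hence $|C_m|=1$, and $\nabla_m$ consists of a single vertex (together with the empty face).

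For the special strong case, I note that any special strong semigroup is strong: its defining condition holds for all $x,y\in S\setminus\{0\}$, in particular for $x\neq y$. So the previous paragraph applies. Alternatively, one can use the characterization $\Ap(S,E)=A\sqcup\{0\}$ and argue directly:
\begin{itemize}
\item $0$ has only the empty factorization;
\item $a\in A$, being a minimal generator, cannot be written as a sum of two nonzero elements of $S$, so its only factorization is $a$ itself.
\end{itemize}

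I expect no real obstacle. The only point requiring care is the identification of vertices of $\nabla_m$ with factorizations. The vertices are the monomials in $C_m$, which correspond to the $0$-faces $\{X^\alpha Y^\beta\}$, and each of these has $\gcd\neq 1$ unless $m=0$. For $m=0$, the set $C_0=\{1\}$ is a single vertex by convention. The uniqueness of factorizations itself was already established in Corollary \ref{escritura_unica}, whose proof rests on Lemma \ref{lemaAperys}: an element $2a\in B$ decomposes only as $a+a$, because any decomposition $x+y$ with $x\neq y$ would force $2a\notin\Ap(S,E)$ by strongness.
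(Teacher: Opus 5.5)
Your proposal is correct and follows the paper's own route: the paper presents this lemma as an immediate consequence of Corollary~\ref{escritura_unica}, identifying the vertices of $\nabla_m$ with the factorizations of $m$ in terms of $E\sqcup A$, and using that every special strong semigroup is strong. Your extra remarks, namely the direct argument via $\Ap(S,E)=A\sqcup\{0\}$ in the special strong case and the convention for $m=0$, are harmless additions.
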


By Theorem \ref{TheoremShalom}, the knowledge of the simplicial complexes $\nabla_m$ associated to the $S$-degree $m$ allows us to obtain the Betti set of the polynomial ideal $I_S$. We consider this approach to determine the indispensable monomials for some strong affine semigroups. To do it, new definitions are necessary. For any $j\in[r]$, let
$$T_j=\Big\{ \alpha \in \N^t\mid \sharp C_{\sum_{i=1}^t \alpha_ie_i+a_j}\ge 2, \mbox{ and } \sharp C_{\sum_{i=1}^t \alpha_ie_i}=1\Big\},$$
and
$$T_j'=\Big\{ \alpha \in \N^t\mid \sharp C_{\sum_{i=1}^t \alpha_ie_i+2a_j}\ge 2, \mbox{ and } \sharp C_{\sum_{i=1}^t \alpha_ie_i+a_j}=1\Big\}.$$
For any non-zero integer $l\in \N$, and any set $L\subseteq \N^l$, ${\rm Hilb}(L)$ denotes the Hilbert basis of $L$. This finite set corresponds to the minimal elements in $L$ with respect to the componentwise partial order on $\N^l$.

The following theorem characterizes some indispensable monomials of strong affine semigroups, and it is the main key to obtain the following results of this section.

\begin{theorem}\label{TheoremYiYj}
Let $S$ be a strong affine semigroup minimally generated by $E\sqcup A$ with $A\neq \emptyset$. Then, the following monomials are indispensable:
\begin{enumerate}
\item $y_i y_j$ for $i,j\in [r]$ with $i\neq j$.
\item $y_i^2$ for any $i\in [r]$ such that $2a_i\notin \Ap(S,E)$.
\item $y_i^3$ for any $i\in [r]$ such that $2a_i\in \Ap(S,E)$.
\item $X^\omega y_j$  for any $j\in [r]$ and $\omega \in {\rm Hilb} (T_j)$.
\item $X^\omega y_j^2$  for any $j\in [r]$ with $2a_j\in\Ap(S,E)$, and $\omega \in {\rm Hilb} (T'_j)$.
\end{enumerate}
Moreover, they are the unique indispensable monomials involving at least one variable in $\{y_1,\ldots, y_r\}$.
\end{theorem}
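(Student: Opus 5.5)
The basic tool is the criterion recalled before the statement: a monomial $X^\alpha Y^\beta$ of $S$-degree $m$ is indispensable if and only if $\{X^\alpha Y^\beta\}$ is an isolated vertex of $\nabla_m$ and $\nabla_m$ has at least two vertices (so it is non-connected). An isolated vertex means that no other factorization of $m$ shares a variable with it. Equivalently, every proper divisor $X^{\alpha'}Y^{\beta'}$ of the monomial has a degree $m'$ with $\sharp C_{m'}=1$. Indeed, if a proper divisor had a second factorization, multiplying it by the complementary factor would give another vertex of $\nabla_m$ sharing a variable with $X^\alpha Y^\beta$. So for each listed monomial I will check two things: $\sharp C_m\ge 2$, and every proper divisor has degree with a unique factorization. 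For the converse I will take an arbitrary indispensable monomial containing some $y$ and force it into the list.

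\textbf{Items 1--3.} Each $a_i$ has a unique factorization, being a minimal generator. For item 1, Proposition~\ref{CheckStrongByGenerators} gives $a_i+a_j-e\in S$ for some $e\in E$. Any factorization of $a_i+a_j-e$ times $x_e$ is then a second vertex of $\nabla_{a_i+a_j}$ involving an $x$-variable, so $\sharp C_{a_i+a_j}\ge 2$. Since the only proper divisors of $y_iy_j$ are $y_i$ and $y_j$, the monomial is isolated.

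For item 2, since $2a_i\notin\Ap(S,E)$ we have $2a_i-e\in S$ for some $e\in E$, and the same argument applies.

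For item 3, $2a_i\in \Ap(S,E)$ has a unique expression by Corollary~\ref{escritura_unica}, and $3a_i\notin\Ap(S,E)$ by Lemma~\ref{lemaAperys}, since $3a_i\notin A\cup 2A\cup\{0\}$. Hence $\sharp C_{3a_i}\ge 2$, while all proper divisors $y_i,y_i^2$ have unique factorizations.

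\textbf{Items 4--5.} Minimality in the Hilbert basis is exactly the divisor condition. Take $\omega\in{\rm Hilb}(T_j)$. By definition $\sharp C_{\omega\cdot e+a_j}\ge 2$. A proper divisor either has the form $X^{\omega'}y_j$ with $\omega'<\omega$ componentwise, or the form $X^{\omega''}$ with $\omega''\le\omega$.
\begin{itemize}
\item In the first case, $\omega'\notin T_j$ by minimality. I still need $\sharp C_{\omega'\cdot e}=1$, which follows because $X^{\omega'}$ divides $X^\omega$ and $\sharp C_{\omega\cdot e}=1$ propagates to divisors (a second factorization of a divisor times the cofactor would give a second one of $\omega\cdot e$). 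Hence $\sharp C_{\omega'\cdot e+a_j}=1$.
\item In the second case, uniqueness holds for the same propagation reason.
\end{itemize}
Item 5 is identical with $a_j$ replaced by $2a_j$ and $T_j'$. Here I must also check that $y_j^2$ has a unique factorization, which holds since $2a_j\in\Ap(S,E)$, and that divisors of the form $X^{\omega'}y_j$ have a unique factorization, which comes from the second condition in $T_j'$ propagated downward.

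\textbf{Converse.} Let $X^\alpha Y^\beta$ be indispensable with $\beta\neq 0$. All proper divisors must have uniquely factorizable degrees.
\begin{itemize}
\item If two distinct $y$'s occur, then $y_iy_j$ divides the monomial, and $y_iy_j$ is not uniquely factorizable by item 1. So the monomial must equal $y_iy_j$.
\item Otherwise $Y^\beta=y_j^k$. If $k\ge 3$, then $y_j^3$ divides it and the monomial equals $y_j^3$. If $2a_j\notin\Ap$, then $y_j^2$ is already not unique, so either the monomial is $y_j^2$ or $k\le 1$.
\item For $k=1$, the monomial is $X^\alpha y_j$ with $\alpha\in T_j$ (the condition $\sharp C_{\alpha\cdot e}=1$ comes from the divisor $X^\alpha$). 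No smaller $\alpha'\in T_j$ can exist, since its monomial would be a proper divisor with non-unique degree. Hence $\alpha\in{\rm Hilb}(T_j)$.
\item The case $k=2$ with $2a_j\in\Ap(S,E)$ and $\alpha\ne 0$ is analogous and gives $\alpha\in{\rm Hilb}(T_j')$. The case $\alpha=0$ is impossible because $y_j^2$ is then the unique factorization of $2a_j$.
\end{itemize}

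I expect the main obstacle to be the indispensability direction's equivalence "isolated vertex iff all proper divisors are uniquely factorizable". Its reverse implication needs care: a second factorization sharing a variable with $X^\alpha Y^\beta$ gives a gcd $g\neq 1$. Dividing both factorizations by $g$ yields two distinct factorizations of the degree of $X^\alpha Y^\beta/g$, which is a proper divisor, and this is exactly what is needed.
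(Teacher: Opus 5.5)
Your proof is correct and follows essentially the paper's route. The paper also starts from the criterion ``$\sharp C_m\ge 2$ and $M$ is coprime to every other element of $C_m$'', obtains the extra factorizations of $a_i+a_j$, $2a_i$, $3a_i$ from strongness, and uses Hilbert-basis minimality for items 4--5; your reformulation via proper divisors with uniquely factorizable degree just packages its case-by-case gcd checks into one lemma, and is slightly more careful in the converse (for instance, it keeps the case $y_j^2$ with $2a_j\notin\Ap(S,E)$).

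One small correction in item 3: the claim $3a_i\notin 2A$ is false in general. For example, $S=\langle 5,6,9,13\rangle$ is strong with $2\cdot 6\in\Ap(S,5)$ and $3\cdot 6=2\cdot 9$. So conclude $3a_i\notin\Ap(S,E)$ directly by applying the strong property to $a_i\neq 2a_i$, rather than from $3a_i\notin 2A$.
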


\begin{proof}
Let $M$ be a monomial and denote $m = S\text{-degree}(M)$. We have that $M$ is indispensable if and only if $\nabla_m$ is not connected and $\{M\}$ is a connected component. Equivalently, $M$ is indispensable if and only if the cardinality of $C_{m}$ is at least two, and for any monomial $M' \in C_m$ distinct from $M$, then $\gcd(M, M') = 1$.

Consider $i,j\in [r]$ with $i\neq j$. Since $S$ is a strong semigroup, the cardinality of $C_{a_i+a_j}$ is at least two, let $X^\alpha Y^\beta \in C_{a_i+a_j}\setminus\{y_iy_j\}$. Note that $i,j\notin \supp (\beta)$. Otherwise, $a_i$ or $a_j$ is not a minimal generator of $S$, or $S$ is such that $S\cap (-S)\neq \{0\}$. Hence, $\gcd(X^\alpha Y^\beta,y_iy_j) = 1$.
Analogously, for any $2a_i\notin\Ap(S,E)$, there also exists more than one monomial in $C_{2a_i}$, but there is no one as $y_iX^{\alpha'} Y^{\beta'}$ in $C_{2a_i}\setminus\{y_i^2\}$.
For every $y_i^3$ such that $2a_i \in \Ap (S,E)$, we know that $\sharp C_{3a_i} \ge 2$. Since $C_{2a_i}=\{y_i^2\}$, there are no monomials $y_iX^\alpha Y^\beta$ in $C_{3a_i}$ with $\alpha\neq 0$.
To prove the fourth item, we consider an integer $j\in [r]$ and an element $\omega \in {\rm Hilb} (T_j)$. Hence, $\sharp C_{\sum_{i=1}^t \omega_i e_i+a_j}\ge 2$, and $\sharp C_{\sum_{i=1}^t \omega_i a_i}=1$. Let $X^\alpha Y^\beta$ be a monomial in $C_{\sum_{i=1}^t \omega_i e_i+a_j}\setminus \{X^\omega y_j\}$. If $j \in \supp(\beta)$, then $\{X^\omega,X^\alpha \frac{Y^\beta}{y_j}\}\subseteq C_{\sum_{i=1}^t \omega_i e_i}$, but this contradicts with the cardinality of $C_{\sum_{i=1}^t \omega_i e_i}$. Furthermore, if we assume that there exists some $k\in \supp(\omega)\cap \supp(\alpha)$, then $(\omega_1,\ldots, \omega_{k-1},\omega_{k}-1,\omega_{k+1},\ldots \omega_t)$ belongs to $T_j$, but this fact implies that $\omega \notin {\rm Hilb} (T_j)$, which is not possible. Consequently, for every $X^\alpha Y^\beta \in C_{\sum_{i=1}^t \omega_i e_i+a_j}\setminus \{X^\omega y_j\}$, we have that $\gcd(X^\omega y_j,X^\alpha Y^\beta) = 1$. The last item can be proved similarly, taking $\omega \in {\rm Hilb} (T'_j)$ and assuming that $2a_j\in\Ap(S,E)$. Hence, the monomials appearing in the different items are indispensable.

Let us prove now that there are no other indispensable monomials. We consider $X^\alpha Y^\beta \in C_m$ an indispensable monomial  involving at least one variable in $\{y_1,\ldots,y_r\}$. We study different cases based on the cardinality of the support of $\beta=(\beta_1,\ldots,\beta_r)$:

\noindent {\em Case 1:} $\sharp \supp (\beta)\ge 2$. Without loss of generality, we can assume that $\beta_1\beta_2\neq 0$. Since $S$ is a strong semigroup, there exist $k\in [t]$, $\alpha'\in \N^t$, and $\beta'\in \N^r$ such that the monomial $M := x_kX^{\alpha+ \alpha'}\frac{Y^{\beta + \beta'}}{y_1y_2} \in C_m$. As $X^\alpha Y^\beta$ is indispensable, then $1 = \gcd(X^\alpha Y^\beta, M).$ Thus, $X^\alpha Y^\beta = y_1 y_2$.

\noindent {\em Case 2:} If $\sharp \supp (\beta) = 1$. Without loss of generality, we can assume that $\beta_1 \neq 0.$

{\em Case 2.1}: assuming that $2a_1 \in \Ap(E,S)$, we distinguish the following cases,

\begin{itemize}
\item If $\beta_1 \geq 3$, then  there exist $k\in [t]$, $\alpha'\in \N^t$, and $\beta'\in \N^r$ such that $3 a_1 = e_k + \sum_{i = 1}^t \alpha_i' e_i + \sum_{i = 1}^r \beta_i' a_i$. Hence, $M = x_k X^{\alpha + \alpha'} y_1^{\beta_1 - 3} Y^{\beta'}\in C_m$. Since $X^\alpha Y^{\beta}$ is indispensable, this means that $1 = \gcd(X^\alpha Y^\beta, M) = X^{\alpha}.$ Thus, $X^\alpha Y^\beta = y_1^3$.

\item  If $\beta_1 = 2$, let us prove that $\alpha \in {\rm Hilb} (T'_j)$. Assume otherwise that $\alpha \notin {\rm Hilb} (T'_j)$, this means that either: (i) $\alpha \notin T'_j$, or (ii) there exists $\alpha' \in T'_j$ such that $X^{\alpha'}$ divides $X^{\alpha}$. In (i), the set $C_{m-a_1}$ has two different elements $N_1$ and $N_2$. So, $N_1 y_1, N_2 y_1 \in C_{m}$ and $\gcd(N_iy_1,X^\alpha y_1^2) \neq 1$ for $i = 1,2$; but this contradicts that $X^{\alpha} Y^{\beta}$ is indispensable. In (ii), the set $C_{\sum_{i = 1}^t \alpha_i'e_i + 2 a_1}$ has at least two elements $N_1, N_2$. So  $X^{\alpha - \alpha'}N_1, X^{\alpha - \alpha'} N_2 \in C_{m}$ and $\gcd(X^{\alpha - \alpha'} N_i,X^\alpha y_1^2) \neq 1$ for $i = 1,2$; but this also contradicts that $X^{\alpha} Y^{\beta}$ is indispensable.

\item  If $\beta_1 = 1$, one can prove that $\alpha \in {\rm Hilb} (T_j)$ analogously.
\end{itemize}

{\em Case 2.2}: when $2a_1 \notin \Ap(E,S)$, and proceeding as in {\em Case 2.1}, one can prove that if $X^{\alpha}Y^{\beta}$ is indispensable, then $\beta_1 = 1$ and $\alpha \in {\rm Hilb} (T_j)$.
\end{proof}

\begin{corollary}\label{CorollaryYiYj}
Given a special strong affine semigroup  $S$ minimally generated by $E\sqcup A$ with $A\neq \emptyset$. Then, the following monomials are indispensable:
\begin{enumerate}
\item $y_i y_j$ for $i,j\in [r]$.
\item  $X^\omega y_j$  for any $j\in [r]$ and $\omega \in {\rm Hilb} (T_j)$.
\end{enumerate}
Moreover, they are the unique indispensable monomials involving at least one variable in $\{y_1,\ldots, y_r\}$.
\end{corollary}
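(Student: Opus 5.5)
The corollary should follow by specializing Theorem~\ref{TheoremYiYj}, since every special strong semigroup is strong. The point is that for a special strong semigroup the Apéry set is $\Ap(S,E)=A\sqcup\{0\}$, by the characterization proved in Section~\ref{sec:sss}. Hence $2a_i\notin \Ap(S,E)$ for every $i\in[r]$. The plan is to see which items of Theorem~\ref{TheoremYiYj} survive under this condition and to check that the list of indispensable monomials containing some $y$-variable collapses to the two stated families.

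First, since $2a_i\notin\Ap(S,E)$ for all $i$, items 3 and 5 of Theorem~\ref{TheoremYiYj} are vacuous. Item 2 now applies to every $i\in[r]$, so $y_i^2$ is indispensable for all $i$. Item 1 gives $y_iy_j$ for $i\neq j$. Together these give exactly item 1 of the corollary, namely $y_iy_j$ for all $i,j\in[r]$ with $i=j$ allowed. Item 4 of the theorem is transcribed verbatim as item 2 of the corollary, with the same sets $T_j$ and their Hilbert bases.

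For the ``moreover'' part, I will appeal to the uniqueness clause of Theorem~\ref{TheoremYiYj}. Any indispensable monomial involving some $y$-variable must belong to one of the five families. Families 3 and 5 are empty because their defining condition $2a_j\in\Ap(S,E)$ never holds. The only monomials left are those in the two listed families.

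There is essentially no obstacle here. The only point needing care is the direct verification that $2a_i-e\in S$ for some $e\in E$, which is immediate from Definition~\ref{definitionspecialStrong} with $x=y=a_i$. This makes $C_{2a_i}$ contain a monomial other than $y_i^2$, so item 2 genuinely applies. I would also note in passing that $T_j$ is the same set whether computed in the strong or the special strong setting, since it depends only on $S$ and its generators.
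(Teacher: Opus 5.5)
Your proposal is correct and is exactly the intended derivation: a special strong semigroup is strong and satisfies $\Ap(S,E)=A\sqcup\{0\}$, so no $2a_i$ lies in $\Ap(S,E)$. Hence items 3 and 5 of Theorem~\ref{TheoremYiYj} are vacuous, item 2 supplies $y_i^2$ for every $i$ (the $i=j$ case of the corollary's first item), and the theorem's uniqueness clause gives the ``moreover'' part; the paper states the corollary without a separate proof precisely because it is this specialization.
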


From now on, we consider that the affine semigroup $S\subseteq \N^p$ is simplicial, that is, the non-negative integer cone generated by $S$ has dimension $p$ and just $p$ extremal rays. For a family of this kind of semigroups, Theorem \ref{TheoremYiYj} can be improved.

\begin{lemma}\label{IndispSimplicial}
Let $S\subseteq \N^p$ be a simplicial strong semigroup with $\sharp E= p$ and $A$ non-empty. Then, the set of $S$-degrees of the indispensable monomials of $I_S$ is
\begin{multline*}
    \{a+a'\mid a,a'\in A,\, a\neq a'\}\cup \{2a\mid 2a\in 2A\setminus \Ap (S,E)\} \\\cup \{3a\mid 2a\in 2A\cap\Ap (S,E)\}\\ \cup \big\{\sum_{i=1}^t\omega_i e_i+a_j \mid j\in [r]\mbox{ and } \omega \in {\rm Hilb} (T_j)\big\}
    \\ \cup \big\{\sum_{i=1}^t\omega_i e_i+2a_j \mid j\in [r],\, 2a_j\in\Ap(S,E) \mbox{ and } \omega \in {\rm Hilb} (T'_j)\big\}.
\end{multline*}

\end{lemma}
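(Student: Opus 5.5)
The plan is to split the indispensable monomials of $I_S$ into those involving some $y$-variable and those that are pure monomials in $x_1,\ldots,x_t$. For the first class we invoke Theorem~\ref{TheoremYiYj}. It lists exactly these monomials, and their $S$-degrees are, item by item, $a_i+a_j$ with $i\neq j$; $2a_i$ when $2a_i\notin\Ap(S,E)$; $3a_i$ when $2a_i\in\Ap(S,E)$; $\sum\omega_ie_i+a_j$ with $\omega\in{\rm Hilb}(T_j)$; and $\sum\omega_ie_i+2a_j$ with $\omega\in{\rm Hilb}(T'_j)$ and $2a_j\in\Ap(S,E)$. These are precisely the five sets in the displayed union. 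The lemma therefore reduces to showing that no monomial $X^\alpha$ in the $x$-variables alone is indispensable.

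To do this we use that $S$ is simplicial with $\sharp E=p$. Since $E\subseteq\Mult(S)$ consists of minimal elements and $\CaC(E)=\CaC(S)$ has exactly $p$ extremal rays, each extremal ray must contain an element of $E$. With $\sharp E=p$, $E$ is then exactly one element per ray, and these elements are linearly independent over $\mathbb{Q}$. Consequently $\sum\alpha_ie_i=\sum\gamma_ie_i$ forces $\alpha=\gamma$, so there is no pure binomial in the $x$'s alone.

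Now suppose $X^\alpha$ is indispensable with degree $m$. Then $\sharp C_m\ge 2$, and every other monomial in $C_m$ has the form $X^\gamma Y^\beta$ with $\beta\neq 0$ and $\gcd(X^\alpha,X^\gamma Y^\beta)=1$. Take such a representation $m=\sum\gamma_ie_i+\sum\beta_ka_k$ with $\beta\ne0$. The key step is to show that some $a_k$ appearing there satisfies $a_k-e_\ell\in\CaC$ for some $e_\ell$ with $\alpha_\ell>0$; from this we derive a contradiction.

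The natural route goes through the rays. Write $m=\sum\alpha_ie_i$; the coordinates of $m$ in the basis $E$ are $\alpha$. Each $a_k\in A$ is not in $E$, yet it lies in $R(E)$-type position: since $a_k\notin\Mult(S)$, there is $e_\ell\in E$ with $a_k-e_\ell\in\CaC(S)$. Passing to coordinates in the basis $E$ (nonnegative rationals on $\CaC$), the condition $a_k-e_\ell\in\CaC$ means the $\ell$-th coordinate of $a_k$ is at least $1$. Because $\gcd$ is $1$, we have $\gamma_\ell=0$ whenever $\alpha_\ell>0$. Comparing coordinates, $\alpha_\ell=\sum_k\beta_k\,(a_k)_\ell$ on $\supp(\alpha)$, and $0=\gamma_i+\sum_k\beta_k(a_k)_i$ off $\supp(\alpha)$. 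Hence every $a_k$ in $\supp(\beta)$ has coordinates only in $\supp(\alpha)$, and every $e_i$ with $\gamma_i>0$ lies in $\supp(\alpha)$. Combined with $\gamma_\ell=0$ on $\supp(\alpha)$, this gives $\gamma=0$, so $m=\sum\beta_ka_k$ and $C_m$ contains a monomial purely in $y$'s.

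Then $m=\sum\beta_ka_k$ with $\sum\beta_k\ge 2$, since $m\notin A$: a single $a_k=\sum\alpha_ie_i$ would contradict minimality of $a_k$. Because $S$ is strong, Case 1 and Case 2 of the proof of Theorem~\ref{TheoremYiYj} apply. If $\sum\beta_k\ge 3$, or if there are two distinct $a_k$, we obtain a representation $m=e_\ell+z$ with $z\in S$ written using some $y$'s. If $\beta=2e_k$ with $2a_k\in\Ap(S,E)$, this degree is the problematic one; here we argue that $m=2a_k\in\Ap(S,E)$ has a unique expression by Corollary~\ref{escritura_unica}, which contradicts $X^\alpha\in C_m$ with $X^\alpha\neq y_k^2$.

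In the remaining cases we get $x_\ell N\in C_m$ with $\gamma'_\ell>0$, where $N$ may involve $y$'s. Applying the same coordinate argument to this new monomial forces its $x$-part to vanish, so $\ell$ must be outside $\supp(\alpha)$. Yet $e_\ell\le_{\CaC}m$, so $\ell\in\supp(\alpha)$, which is a contradiction. Hence no pure $x$-monomial is indispensable, and the lemma follows.

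The main obstacle is this coordinate bookkeeping, specifically showing that simpliciality together with $\sharp E=p$ excludes the pure $x$-monomials. I expect the subtle point to be the case where $m$ has an expression purely in terms of $A$, and I would handle it via Corollary~\ref{escritura_unica} and the strong property as described above.
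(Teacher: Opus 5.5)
\textbf{There is a genuine gap: you reduce the lemma to a false statement.} You set out to prove that no pure $x$-monomial is indispensable, but such monomials can be indispensable.

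Take $S=\langle 3,4,5\rangle\subseteq\N$, so $p=1$, $E=\{3\}$ and $A=\{4,5\}$. It is strong, since $x+y-3\ge 4$ for distinct non-zero $x,y\in S$. Here $C_9=\{x_1^3,\,y_1y_2\}$, the two monomials are coprime, and so $x_1^3$ is indispensable. The paper's own final example behaves the same way: there $C_{(10,8)}=\{x_1^4x_2^3,\,y_1y_3,\,y_2^2\}$, pairwise coprime.

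The step that fails is your claim that strongness gives $m=e_\ell+z$ with $z$ \emph{written using some $y$'s}. Strongness only gives one of $a_k+a_{k'}-e_\ell\in S$, $3a_k-e_\ell\in S$, or $2a_k-e_\ell\in S$ (when $2a_k\notin\Ap(S,E)$). That element may admit only pure-$x$ expressions, as $9-3=6$ does above. In that case $x_\ell N$ is a pure $x$-monomial of degree $m$, so by linear independence of $E$ it equals $X^\alpha$. Your coordinate argument is valid only for monomials of $C_m$ that differ from, and hence are coprime to, $X^\alpha$, so no contradiction follows.

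Let $\epsilon_k$ be the $k$-th unit vector. Your argument does go through in three cases:
\begin{enumerate}
\item $\sum_k\beta_k\ge 3$ and $\sharp\supp(\beta)\ge 2$;
\item $\beta=\beta_k\epsilon_k$ with $\beta_k\ge 4$;
\item $\beta=3\epsilon_k$ with $2a_k\notin\Ap(S,E)$, using $3a_k-e_\ell=(2a_k-e_\ell)+a_k$.
\end{enumerate}
It breaks exactly when $Y^\beta$ is one of $y_ky_{k'}$ with $k\neq k'$, $y_k^2$ with $2a_k\notin\Ap(S,E)$, or $y_k^3$ with $2a_k\in\Ap(S,E)$.

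\textbf{The repair is to change the target.} The lemma only concerns degrees, so it suffices to show that the degree of an indispensable pure $x$-monomial lies in the displayed union. Your coordinate argument is correct and shows that every other monomial of $C_m$ is some pure $Y^\beta$. Your case analysis then leaves only $m=a_k+a_{k'}$, $m=2a_k$ with $2a_k\notin\Ap(S,E)$, or $m=3a_k$ with $2a_k\in\Ap(S,E)$. These are precisely the first three sets of the union. Combining this with Theorem~\ref{TheoremYiYj} for monomials containing some $y$ gives the equality.

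This is essentially the paper's route. After normalising $E$ to $\{d_ib_i\}$, it notes that $I_S$ has no minimal generator in $\k[X]$. It then shows that in a minimal generator with a pure $X$-monomial, the other monomial is a pure $Y^\gamma$. Finally it proves $Y^\gamma$ is indispensable by the same split on $\sharp\supp(\gamma)$ and $\sum_i\gamma_i$. So the paper does not exclude pure $x$ indispensables; it shows they share their degrees with indispensable $y$-monomials.
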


\begin{proof}
Since $S\subseteq \N^p$ is a simplicial strong semigroup, it is isomorphic to a strong affine semigroup $S'\subseteq \N^p$ minimally generated by $E'\sqcup A'$, with  $E'=\min_{\le_{\N^p}}(S'\setminus \{0\})=\cup_{i=1}^p\{d_i b_i\}$ where $\{b_1,\ldots ,b_p\}$ is the usual canonical basis of $\N^p$, and $d_i$ is a non-zero natural number. In particular, note that $E'$ corresponds with the image set of $E$, and $A'$ with the image set of $A$. Moreover, $I_S=I_{S'}$. Moreover, there is no minimal generator of $I_S$ in $\k[X]$.
Suppose that $f=x_{i_1}^{\alpha_{i_1}}\cdots x_{i_h}^{\alpha_{i_h}} -x_{i_{h+1}}^{\alpha_{i_{h+1}}}\cdots x_{i_t}^{\alpha_{i_t}}Y^\gamma$ is a minimal generator of $I_S$, thus these monomials belong to different connected components of the associated simplicial complex. Note that $S'\text{-degree}(f) = \sum_{j = 1}^h \alpha_{i_j}b_{i_j}$ and, then,  $\alpha_{i_{h+1}}=\cdots =\alpha_{i_{t}} =0$ since $E'=\cup_{i=1}^p\{d_i b_i\}$, and that $\gamma$ has to be non-zero.
Now, we prove that, if $f$ is a minimal generator, then $Y^\gamma$ is also an indispensable monomial.
We distinguish cases depending on the cardinality of $\supp(\gamma)$.
\begin{itemize}
    \item If $\sharp\supp(\gamma)\geq 3$, then applying the definition of being strong  $x_{i_1}^{\alpha_{i_1}}\cdots x_{i_h}^{\alpha_{i_h}}$ and $Y^\gamma$ belong to the same connected component, which is impossible.
    \item If $\sharp\supp(\gamma)= 2$, and $\sum _{i=1}^r \gamma_i\geq 3$, then the monomials are also in the same connected component, which is false. So, $\sum _{i=1}^r \gamma_i=2$, and the $S$-degree of the binomial is $a+a'$ for some $a,a'\in A$ such that $a\neq a'$.
    \item Assume $\sharp\supp(\gamma)= 1$. Without loss of generality, we may assume that $\gamma_1 > 0$. If $\gamma_1\geq 4$, we obtain that both monomials belong to the same connected component, a contradiction. If $\gamma_1= 3$, then the  $S$-degree of $y_1^3$ is $2a_1+a_1$ with $a_1\in A$ and we distinguish two situations depending on whether $2a_1$ belongs to $\Ap(S,E)$.
    If $2a_1\notin \Ap(S,E)$, then we conclude that both monomials are in the same connected component. Otherwise, the result holds. Finally, if $\gamma_1=2$, then $2a_1\notin\Ap(S, E)$, which completes the proof.
\end{itemize}
\end{proof}

The previous results allow us to determine all the Betti elements of the ideal $I_S$ when the ideal is Cohen-Macaulay. In general, given $R$ a Noetherian local ring, a finite $R$-module $M\neq0$ is a Cohen-Macaulay module if the depth of $M$ is equal to its dimension.
If $R$ is a Cohen-Macaulay module, then $ R$ is called a Cohen-Macaulay ring (see \cite{libro-C-M} for details). A semigroup is called Cohen-Macaulay if its associated ring $\k[S]$ is Cohen-Macaulay.

\begin{proposition}\label{CM_simplicial_strong}
Let $S\subseteq \N^p$ be a Cohen-Macaulay simplicial strong semigroup with $\sharp E= p$ and $A$ a non-empty set. Then, the set of Betti elements is
\begin{multline*}
    \{a+a'\mid a,a'\in A,\, a\neq a'\}\cup \{2a\mid 2a\in 2A\setminus \Ap (S,E)\} \\\cup \{3a\mid 2a\in 2A\cap\Ap (S,E)\}.
\end{multline*}
\end{proposition}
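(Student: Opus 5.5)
Since $S$ is simplicial with $\sharp E=p$, I would first use the reduction from the proof of Lemma \ref{IndispSimplicial}: replace $S$ by an isomorphic $S'$ with $E'=\{d_1b_1,\ldots,d_pb_p\}$ on the coordinate axes. This does not change $I_S$, its Betti elements, or the Apéry sets. The Cohen--Macaulay hypothesis enters through the standard characterization for simplicial affine semigroups: $\k[S]$ is Cohen--Macaulay if and only if every element of $S$ has a unique expression $w+\sum_i \lambda_i e_i$ with $w\in\Ap(S,E)$ and $\lambda\in\N^p$. Equivalently, $\k[S]$ is a free module over $\k[x_1,\ldots,x_p]$ with basis the monomials of $\Ap(S,E)$.

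The first step is to show that no Betti element comes from the families involving ${\rm Hilb}(T_j)$ or ${\rm Hilb}(T'_j)$, and more generally that every Betti element has the form $w+\sum\lambda_ie_i$ with $\lambda=0$. Let $m$ be a Betti element. By the proof of Lemma \ref{IndispSimplicial}, a minimal binomial generator has the form $X^\alpha - Y^\gamma$ or $X^\alpha Y^\beta-X^{\alpha'}Y^{\beta'}$. Write $m=w+\sum\lambda_ie_i$ with $w\in\Ap(S,E)$. If $\lambda\neq0$, say $\lambda_k>0$, I claim that every monomial in $C_m$ can be connected in $\nabla_m$ to a monomial divisible by $x_k$. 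Take a factorization $M\in C_m$ not divisible by $x_k$, and write its $S$-degree as a sum of its generators. By Corollary \ref{escritura_unica}, $w$ has a unique factorization, and by Lemma \ref{lemaAperys} $w\in A\sqcup 2A\sqcup\{0\}$. Using freeness over $\k[X]$, the degree $m-w$ decomposes uniquely. A factor $N$ of $M$ can then be peeled off so that its degree lies outside $\Ap(S,E)$. By the strong property, this can be rewritten with some $e_\ell$, and by uniqueness of the Apéry decomposition one can arrange $\ell=k$ by repeating. Hence all monomials lie in one component, so $\nabla_m$ is connected, a contradiction.

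The second step treats $\lambda=0$, so $m\in\Ap(S,E)$. Then $\nabla_m$ has a single vertex by the lemma following Theorem \ref{TheoremShalom}, so $m$ is not a Betti element. This shows the care needed: the listed degrees $a+a'$, $2a\notin\Ap$, and $3a$ are not in $\Ap(S,E)$. The argument should therefore instead show that the sub-monomial of $y$-variables, $Y^\gamma$, forces $m=\deg(Y^\gamma)$ with $\alpha$ absent. Concretely, from the case analysis in Lemma \ref{IndispSimplicial}, $\deg Y^\gamma\in\{a+a',2a,3a\}$. It remains to exclude extra $X$-factors on the $Y$ side and $\alpha\ne0$ in mixed binomials $X^\alpha y_j$. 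Freeness gives uniqueness of the decomposition $m=w+\sum\lambda_ie_i$. If $X^\omega y_j$ with $\omega\neq0$ were in a minimal binomial, then $\omega e+a_j$ would have another factorization. Since $a_j\in\Ap(S,E)$, the decomposition $(a_j,\omega)$ is the unique one. Any other factorization reduces modulo $E$, via the strong property, to an Apéry element times $X$-part, and uniqueness forces it to share an $x$-variable with $X^\omega y_j$. So ${\rm Hilb}(T_j)=\emptyset$, and similarly ${\rm Hilb}(T'_j)=\emptyset$.

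Finally, the three listed families are indispensable by Theorem \ref{TheoremYiYj}, items 1--3, so they are Betti elements. I expect the main obstacle to be the middle step: rigorously converting the Cohen--Macaulay freeness into the statement that any two factorizations of $\sum\omega_ie_i+a_j$ (or $+2a_j$) share an $x$-variable. My first attempt would be to show that any factorization of such a degree must involve some $x_k$ with $\omega_k>0$, using that $Y$-only factorizations have degree outside the coset determined by $a_j$ unless they equal $y_j$ or $y_j^2$.
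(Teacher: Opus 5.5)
Your last paragraph is fine: items 1--3 of Theorem~\ref{TheoremYiYj} show that the three listed families consist of Betti elements. The argument for the reverse inclusion, however, rests on two claims that are false.

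\textbf{First false claim.} Your first step says that $m\notin\Ap(S,E)$ forces $\nabla_m$ to be connected. This already fails for $m=a+a'$ with $a\neq a'$ in $A$, where $y_ay_{a'}$ is an isolated vertex. Combined with your second step, it would leave $I_S$ with no Betti elements at all.

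\textbf{Second false claim.} Your replacement claim, that Cohen--Macaulayness forces ${\rm Hilb}(T_j)={\rm Hilb}(T'_j)=\emptyset$, also fails. Take the paper's own example, $E=\{(1,2),(2,0)\}$ and $A=\{(4,3),(5,4),(6,5)\}$. There $e_1+e_2+a_2=(8,6)=2a_1$ and $C_{(8,6)}=\{x_1x_2y_2,\,y_1^2\}$. So $x_1x_2y_2$ is indispensable, $(1,1)\in{\rm Hilb}(T_2)$, and $x_1x_2y_2-y_1^2$ lies in the minimal generating set $\Lambda$ displayed there. The flaw is that uniqueness of the decomposition $m=w+\sum\lambda_ie_i$ is a statement about the degree $m$, not about its factorizations. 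A pure-$Y$ factorization such as $y_1^2$ contains no $x$-variable. Rewriting part of it via the strong property produces a \emph{different} monomial, so nothing forces $y_1^2$ to share a variable with $X^\omega y_j$.

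\textbf{What the proposition actually needs.} It does not need mixed indispensable monomials to be absent. It needs their degrees to coincide with degrees of $y_ky_l$, $y_k^2$ or $y_k^3$. The Cohen--Macaulay hypothesis should be used as follows.
\begin{enumerate}
\item From your freeness formulation (equivalently Goto's criterion, which the paper invokes): if $m-e_i,\,m-e_k\in S$ with $i\neq k$, comparing the two Apéry decompositions gives $m-e_i-e_k\in S$.
\item Hence all monomials of $C_m$ with non-zero $X$-part lie in one connected component of $\nabla_m$, since any two of them are adjacent to a common monomial divisible by $x_ix_k$.
\item If $m$ is a Betti element, then $m\notin\Ap(S,E)$ by Corollary~\ref{escritura_unica}. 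So this component is non-empty, and some other component consists only of pure-$Y$ monomials.
\item Take $Y^\gamma$ in that component. The elements $a_k+a_l-e$, $3a_k-e$, and (when $2a_k\notin\Ap(S,E)$) $2a_k-e$ of $S$ produce a monomial with an $x$-variable that shares a $y$-variable with $Y^\gamma$. This is a contradiction unless $Y^\gamma$ is $y_ky_l$ with $k\neq l$, or $y_k^2$ (and then $2a_k\notin\Ap(S,E)$ because $m\notin\Ap(S,E)$), or $y_k^3$ with $2a_k\in\Ap(S,E)$. The case $Y^\gamma=y_k$ is impossible because $a_k$ has a unique factorization.
\end{enumerate}
This yields exactly the listed set. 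It is in substance the paper's route: Goto's theorem rules out minimal binomials with two mixed monomials, and the case analysis of Lemma~\ref{IndispSimplicial}, plus a similar argument for binomials $Y^\gamma-X^\alpha Y^\beta$, handles the rest.
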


\begin{proof}
First, we prove there is no minimal generator in $I_S$ with both monomials in $\k[X,Y]\setminus \big(\k[X]\cup \k[Y]\big)$. Otherwise, we can assume that $X^{\alpha}Y^{\beta}-X^{\alpha'}Y^{\beta'}$ is a minimal generator of $I_S$ with $\alpha\neq 0 \neq \alpha'$ and $\beta\neq 0 \neq \beta'$. Without loss of generality, we assume that $\alpha_1$ and $\alpha_p'$ are non-null.
Hence, $(\alpha_1-1) e_1+\sum_{k=2}^p \alpha_ke_k+ \sum _{q=1}^r\beta_q a_q+e_1= (\alpha'_p-1) e_p+\sum_{k=1}^{p-1} \alpha'_ke_k+\sum _{q=1}^r\beta'_q a_q+e_p$. Since $S$ is Cohen-Macaulay, by \cite[Theorem 2.6]{Goto}, the element $(\alpha_1-1) e_1+\sum_{k=2}^p \alpha_ke_k+ \sum _{q=1}^r\beta_q a_q-e_p= (\alpha'_p-1) e_p+\sum_{k=1}^{p-1} \alpha'_ke_k+\sum _{q=1}^r\beta'_q a_q-e_1$ belongs to $S$. Hence, the monomials $X^{\alpha}Y^{\beta}$ and $X^{\alpha'}Y^{\beta'}$ are in the same connected component of its associated simplicial complex, which is not possible.

By the proof of Lemma \ref{IndispSimplicial}, we know that the Betti element obtained from any minimal binomial in $I_S$ with some monomial in $\k[X]$ corresponds to the $S$-degree of some indispensable monomial in $\k[Y]$.

Let $Y^\gamma-X^\alpha Y^\beta$ be a minimal generator of $I_S$ and suppose that $Y^\gamma$ is not an indispensable monomial and $\beta\neq 0$. So, $\sum_{i=1}^r\gamma_i \geq 3$, $Y^\gamma=y_i^3$ with $2a_i\notin \Ap(S,E)$, or $Y^\gamma=y_i^2$ with $2a_i\in \Ap(S,E)$. The last possibility does not happen since the binomial is non-zero.
The first two possibilities imply there is a monomial $X^{\alpha'}Y^{\beta'}$ in the same connected component of $Y^\gamma$ with $\alpha'$, and $\beta'$ non-null. If $\alpha\neq 0$, then $X^{\alpha'}Y^{\beta'}-X^\alpha Y^\beta$ is a minimal generator, but it is a contradiction. Assuming $\alpha=0$, if $Y^\beta$ is indispensable, then the result holds. Otherwise, analogously to $Y^\gamma$, we prove that there exists a binomial $X^{\alpha'}Y^{\beta'}-X^{\alpha''} Y^{\beta''}$ which is a minimal generator of $I_S$, and it is a contradiction again.
\end{proof}

The last example illustrates Proposition \ref{CM_simplicial_strong}.

\begin{example}
Let $S\subset \N^2$ be the Cohen-Macaulay simplicial strong semigroup minimally generated by $E\cup A$, where $E= \{(1,2),(2,0)\}$ and $A=\{(4,3),(5,4),(6,5)\}$. This is the convex body semigroup given by the rational triangle with vertex set $\{(1, 2), (2, 0), (1.2, 0.9)\}$. By Corollary \cite[Corollary 12]{G-V-2014_CohenM}, the affine semigroup $S$ is Cohen-Macaulay.

In this example, the ideal $I_S\subset \k[x_1,x_2,y_1,y_2,y_3]$ is minimally generated by
\begin{multline*}\Lambda=\{x_1x_2y_2-y_1^2, x_1x_2y_3-y_1y_2, y_1y_3-y_2^2, x_1^4x_2^3-y_1y_3, \\ x_1^3x_2^2y_1-y_2y_3, x_1^2x_2y_1^2-y_3^2\}.
\end{multline*}
Note that the $S$-degrees of the elements in $\Lambda$ correspond to the Betti set appearing in Proposition \ref{CM_simplicial_strong}.
\end{example}

\subsection*{Funding}

The first author is partially supported by grant
PID2023-149508NB-I00 funded by MICIU/AEI /10.13039/501100011033 and by
FEDER, UE.

The second author is partially supported by the Centro de Estudios Avanzados en Física, Matemáticas y Computación (CEAFMC) of the University of Huelva. The author was also partially supported by the Plan Propio de Estímulo y Apoyo a la Investigación y Transferencia 2025–2027 (Universidad de Cádiz).

The last two authors are partially supported by grant PID2022-138906NB-C21 funded by MICIU/AEI/ 10.13039/501100011033 and by ERDF/EU, and by Junta de Andalucía research group FQM343.

This publication and research have been partially granted by INDESS (Research University Institute for Sustainable Social Development), Universidad de Cádiz, Spain.

\subsection*{Author information}

I. García-Marco. Instituto de Matemáticas y Aplicaciones (IMAULL), Universidad de La Laguna, E-38200 La Laguna (Spain). E-mail: iggarcia@ull.edu.es.

\medskip

\noindent
R. Tapia-Ramos. Departamento de Ciencias Integradas y Centro de Estudios Avanzados en Física, Matemáticas y Computación, Universidad de Huelva, E-21071  Huelva (Spain)
E-mail: raquel.tapia@dci.uhu.es.

\medskip

\noindent
A. Vigneron-Tenorio. Departamento de Matem\'aticas/INDESS (Instituto Universitario para el Desarrollo Social Sostenible), Universidad de C\'adiz, E-11406 Jerez de la Frontera (C\'{a}diz, Spain).
E-mail: alberto.vigneron@uca.es.

\end{document}